\newtheorem{thm}{Theorem}[section]
\newtheorem{prop}[thm]{Proposition}
\newtheorem{cor}[thm]{Corollary}
\theoremstyle{definition} %The commands below have bold title, standard text
\theoremstyle{remark} %The commands below have italic title, standard text
\newenvironment{defn}[1][Definition.]{\begin{trivlist}
\item[\hskip \labelsep {\bfseries #1}]}{\end{trivlist}}
\newcommand{\dmo}{\DeclareMathOperator}
\newcommand{\R}{\mathbb{R}}
\newcommand{\Q}{\mathbb{Q}}
\newcommand{\co}{\mathbb{C}}\newcommand{\Z}{\mathbb{Z}}
\newcommand{\al}{\alpha}\newcommand{\be}{\beta}\newcommand{\ga}{\gamma}\newcommand{\ep}{\epsilon}\newcommand{\si}{\sigma}\newcommand{\ze}{\zeta}
\newcommand{\vp}{\varphi}\newcommand{\Om}{\Omega}\newcommand{\ka}{\kappa}\newcommand{\la}{\lambda}
\newcommand{\Ga}{\Gamma}
\newcommand{\wtil}{\widetilde}\newcommand{\ta}{\theta}\newcommand{\Si}{\Sigma}\newcommand{\om}{\omega}
\newcommand{\cd}{\cdots}\newcommand{\ld}{\ldots}
\newcommand{\sbs}{\subset}\newcommand{\bs}{\backslash}\newcommand{\pa}{\partial}\newcommand{\es}{\emptyset}\newcommand{\bbm}{\mathbbm}
\newcommand{\xra}{\xrightarrow}
\newcommand{\ra}{\rightarrow}
\newcommand{\hra}{\hookrightarrow}
\newcommand{\bb}[1]{\mathbb{#1}}\newcommand{\ca}[1]{\mathcal{#1}}
\newcommand{\fr}[2]{\frac{#1}{#2}}
\newcommand{\ot}{\otimes}
\newcommand{\lan}{\langle}\newcommand{\ran}{\rangle}
\newcommand{\op}{\oplus}
\newcommand{\ti}{\times}
\dmo{\sgn}{sign}
\dmo{\we}{\wedge}
\dmo{\ind}{ind}\dmo{\Ind}{Ind}
\dmo{\bop}{\bigoplus}\dmo{\pic}{Pic}
\dmo{\coker}{coker}\dmo{\vol}{Vol}\dmo{\gal}{Gal}\dmo{\perm}{Perm}
\dmo{\tor}{Tor}\dmo{\ext}{Ext}\dmo{\Ext}{Ext}
\dmo{\aut}{Aut}
\dmo{\Aut}{Aut}
\dmo{\inn}{Inn}\dmo{\var}{Var}
\dmo{\dep}{depth}\newcommand{\rest}[2]{#1\bigr\vert_{#2}}
\dmo{\ad}{ad}\dmo{\curl}{curl}
\dmo{\hy}{\bb H}\dmo{\Sl}{SL}
\dmo{\SO}{SO}\dmo{\psl}{PSL}
\dmo{\isom}{Isom}\dmo{\Isom}{Isom}
\dmo{\conf}{Conf}
\dmo{\stab}{Stab}\dmo{\Jac}{Jac }
\dmo{\diam}{diam}\dmo{\fix}{Fixed}\dmo{\Fix}{Fix}
\dmo{\injR}{injRad}\dmo{\Ad}{Ad}
\dmo{\esv}{ess-vol}\dmo{\out}{Out}\dmo{\Out}{Out}
\dmo{\nil}{Nil}\dmo{\sol}{Sol}
\dmo{\Div}{div}
\dmo{\SU}{SU}
\dmo{\SP}{SP}
\dmo{\Sp}{Sp}
\dmo{\rk}{rk}
\dmo{\rank}{rank}
\dmo{\psp}{PSp}\dmo{\psu}{PSU}
\dmo{\PU}{PU}\dmo{\pgl}{PGL}
\dmo{\Mod}{Mod}\dmo{\range}{Range}
\dmo{\eu}{eu}\dmo{\mi}{mi}
\dmo{\Log}{Log}\dmo{\supp}{supp}
\dmo{\maps}{Maps}\dmo{\Gr}{Gr}
\dmo{\Pin}{Pin}
\dmo{\Spin}{Spin}\dmo{\Str}{Str}
\dmo{\Sq}{Sq}\dmo{\Symp}{Symp}
\dmo{\pd}{PD}\dmo{\PD}{PD}\dmo{\sig}{Sig}
\dmo{\Set}{Set}\dmo{\Top}{Top}
\dmo{\ev}{ev}\dmo{\St}{St}
\dmo{\Pt}{Pt}\dmo{\pt}{pt}
\dmo{\colim}{colim }\dmo{\Pl}{PL}
\dmo{\String}{String}\dmo{\smear}{smear}
\dmo{\dev}{dev}
\dmo{\met}{Met}\dmo{\contact}{Contact}
\dmo{\teich}{Teich}\dmo{\Teich}{Teich}\dmo{\qi}{QI}
\dmo{\der}{Der}
\dmo{\cl}{Cliff}\dmo{\Cl}{Cl}
\dmo{\Pf}{Pf}
\dmo{\ch}{ch}\dmo{\diag}{diag}
\dmo{\grad}{grad}\dmo{\Char}{char}
\dmo{\spec}{Spec}\dmo{\Arg}{Arg}
\dmo{\rad}{rad}\dmo{\im}{Im}
\dmo{\Hom}{Hom}\dmo{\End}{End}
\dmo{\tr}{tr}\dmo{\id}{Id}
\dmo{\gl}{GL}
\dmo{\sym}{Sym}\dmo{\Sym}{Sym}
\dmo{\com}{Comm}
\dmo{\Lk}{Lk}
\dmo{\Rep}{Rep}
\dmo{\Conf}{Conf}
\dmo{\PConf}{PConf}
\dmo{\Push}{Push}
\dmo{\Cont}{Cont}
\dmo{\sm}{\setminus}
\dmo{\vn}{\varnothing}
\dmo{\disk}{\mathbb D}
\dmo{\Trd}{Trd}\dmo{\Mat}{Mat}
\dmo{\Riem}{Riem}
\dmo{\Diffn}{\Diff_0}\dmo{\diff}{Diff}
\dmo{\Diff}{Diff}\dmo{\homeo}{Homeo}
\dmo{\Homeo}{Homeo}\dmo{\Fr}{Fr}
\dmo{\rot}{rot}\dmo{\Emb}{Emb}
\dmo{\Ham}{Ham}\dmo{\Met}{Met}
\dmo{\Ein}{Ein}\dmo{\CP}{\co P}
\dmo{\Per}{Per}\dmo{\Ric}{Ric}
\newcommand{\C}{\mathbb C}\dmo{\Nrd}{Nrd}
\dmo{\Comp}{Comp}\dmo{\PSC}{PSC}
\dmo{\Cent}{Cent}\dmo{\Orb}{Orb}
\dmo{\aind}{a-ind}\dmo{\tind}{t-ind}
\dmo{\constant}{constant}
\dmo{\Td}{Td}
\dmo{\LMod}{LMod}
\dmo{\SMod}{SMod}
\dmo{\SDiff}{SDiff}
\dmo{\Br}{Br}
\dmo{\csch}{csch}
\dmo{\triv}{triv}
\dmo{\genus}{genus}
\dmo{\Homeq}{Homeq}
\dmo{\PP}{\mathbb{P}}
\dmo{\U}{U}
\begin{document}

\title{Characteristic classes of fiberwise branched surface bundles via arithmetic}

\author{Bena Tshishiku}
\address{Department of Mathematics, Stanford University, Stanford, CA 94305} \email{tshishikub@gmail.com}

%\subjclass[2000]{Primary 54C40, 14E20; Secondary 46E25, 20C20}%    General info

\date{\today}

\keywords{Characteristic classes, surface bundles, mapping class groups, monodromy, arithmetic groups, index theory}

\begin{abstract} 
This paper is about cohomology of mapping class groups from the perspective of arithmetic groups. For a closed surface $S$ of genus $g$, the mapping class group $\Mod(S)$ admits a well-known arithmetic quotient $\Mod(S)\ra\Sp_{2g}(\Z)$, under which the stable cohomology of $\Sp_{2g}(\Z)$ pulls back to the algebra generated by the odd MMM classes of $\Mod(S)$. We extend this example to other arithmetic groups associated to mapping class groups and explore some of the consequences for surface bundles. 

For $G=\Z/m\Z$ and for a regular $G$-cover $S\ra\bar S$ (possibly branched), a finite index subgroup $\Ga<\Mod(\bar S)$ acts on $H_1(S;\Z)$ commuting with the deck group action, thus inducing a homomorphism $\Ga\ra\Sp^G$ to an arithmetic group $\Sp^G<\Sp_{2g}(\Z)$. The induced map $H^*(\Sp^G;\Q)\ra H^*(\Ga;\Q)$ can be understood using index theory. To this end, we describe a families version of the $G$-index theorem for the signature operator and apply this to (i) compute $H^2(\Sp^G;\Q)\ra H^2(\Ga;\Q)$, (ii) re-derive Hirzebruch's formula for signature of a branched cover (in the case of a surface bundle), (iii) compute Toledo invariants of surface group representations to $\SU(p,q)$ arising from Atiyah--Kodaira constructions, and (iv) describe how classes in $H^*(\Sp^G;\Q)$ give equivariant cobordism invariants for surface bundles with a fiberwise $G$ action, following Church--Farb--Thibault.\end{abstract}

\maketitle

\section{Introduction}

The goal of this paper is to widen the bridge connecting arithmetic groups and the cohomology of mapping class groups using the index theory of elliptic operators. 

For a surface $S$ the mapping class group $\Mod(S)$ is the group of diffeomorphisms up to isotopy $\Mod(S)=\pi_0\Diff(S)$. For a finite set $Z\sbs S$, we denote $\Mod(S,Z)=\pi_0\Diff(S,Z)$, where $\Diff(S,Z)$ is the group of diffeomorphisms which fix each $z\in Z$. 

{\it Basic example.} Let $S$ be a closed surface of genus $g$. The mapping class group $\Mod(S)$ acts on $H_1(S;\Z)$ inducing a surjection $\Mod(S)\ra\Sp_{2g}(\Z)$. Using index theory, one can answer the question: What is the induced map on cohomology 
\[\al^*:H^*\big(\Sp_{2g}(\Z);\Q\big)\ra H^*\big(\Mod(S);\Q\big)?\]

To explain the answer, we work with bundles and characteristic classes. Let $M\ra B$ be an $S$-bundle. The associated Hodge bundle is a rank-$g$ complex vector bundle $E\ra B$ classified by a map $B\ra B\U(g)$ that factors as 
\[B\ra B\Mod(S)\ra B\Sp_{2g}(\Z)\ra B\Sp_{2g}(\R)\sim B\U(g).\]
The associated K-theory class $E\in K(B)$ is closely related to the index $\ind(D)$ of a family $D=\{D_b\}_{b\in B}$ of elliptic operators on $S$. If $\bar E$ denotes the conjugate bundle, then $\ind(D)=E-\bar E\in K(B)$; see \cite[\S 6]{asiii} and \cite{asiv}. The index theorem gives a topological way to compute the index: $\ind(D)=\pi_!\big(\si(D)\big)$ where $\si(D)\in K(M)$ is the \emph{symbol class}, and $\pi_!: K(M)\ra K(B)$ is the pushforward in K-theory. After applying the Chern character (and the Grothendieck--Riemann--Roch computation), one arrives at a formula 
\begin{equation}\label{eqn:indexfamilies}
\ch (E-\bar E)=\pi_!\left(\fr{x}{\tanh(x/2)}\right),
\end{equation}
where $x=e(T_\pi M)$ is the Euler class of the vertical tangent bundle. See \cite[\S 6]{asiii} and \cite[Thm 5.1]{asiv}. 

On the one hand, the Chern character $\ch\big(E-\bar E\big)$ is determined by the odd Chern classes of $E$, so it is closely related to $\im(\al^*)$. On the other hand, the right-hand side is a polynomial in the odd MMM classes because $\fr{x}{\tanh(x/2)}$ is polynomial in $x^2$. In fact, in the stable range, the image of $H^*\big(\Sp_{2g}(\Z);\Q\big)\ra H^*\big(\Mod(S);\Q\big)$ is the algebra generated by the odd MMM classes $\{\kappa_{2i-1}: i\ge1\}$. 

{\it A more general setting.} Our story begins with the observation that $\Sp_{2g}(\Z)$ is one of many arithmetic groups related to mapping class groups \cite{looijenga_prym, gllm}. Specifically, for a finite, regular $G$-cover $\mu:S\ra\bar S$ branched over $\bar Z\sbs\bar S$, there is a finite index subgroup $\Ga<\Mod(\bar S,\bar Z)$ that lifts to the centralizer $\Mod^G$ of $G<\Mod(S)$. The group $\Ga$ acts on $H_1(S;\Z)$ by $G$-module maps inducing a homomorphism $\Ga\ra\Mod^G\ra\Sp^G$ to the centralizer of $G<\Sp_{2g}(\Z)$. Our first goal is to derive an index formula that will enable us to study the map 
\[\al ^*:H^*\big(\Sp^G;\Q\big)\ra H^*\big(\Mod^G;\Q\big).\]

{\bf Standing assumption.} For simplicity we will restrict attention to the case (i) $G\simeq\Z/m\Z$, and (ii) $G$ acts trivially on the pre-image of the branched set: $\mu^{-1}(\bar Z)=\fix(G)$. 

To study $\al ^*$, we work on the level of bundles as in the previous example, utilizing the fact that $B\Diff(S)^G\ra B\Mod(S)^G$ is a homotopy equivalence (Earle--Schatz \cite{earle-schatz}). Thus a bundle with monodromy in $\Mod^G$ is an example of what we call an \emph{$(S,G)$-bundle}: 
%actually admits a fiberwise $G$-action. More precisely, bundles with monodromy in $

\begin{defn}
Fix an embedding $i:G\hra\Diff(S)$. Let $\pi: M\ra B$ be an $S$ bundle, and suppose $G$ acts on $M$ preserving each fiber. We call $\pi:M\ra B$ an \emph{$(S,G)$-bundle} if there are charts $B=\bigcup_\al U_\al$ and trivializations $\phi_\al:\pi^{-1}(U_\al)\simeq U_\al\ti S$ that identify the $G$ action on each fiber with $i$.
\end{defn}

The primary known examples of $(S,G)$ bundles are obtained by the fiberwise branched covering constructions of Atiyah--Kodaira \cite{atiyah} and Morita \cite{morita_book}. 
%Important examples of $(S,G)$ bundles are obtained by fiberwise branched covering constructions

{\it Nota bene.} The action $i:G\hra\Diff(S)$ is part of the data of an $(S,G)$ bundle, although we omit it from the notation. Whenever we talk of $(S,G)$ bundles, we will always assume that the action $i:G\hra\Diff(S)$ has been chosen beforehand. 

We will be interested in the following invariants of an $(S,G)$ bundle $\pi:M\ra B$. 
\begin{enumerate}[(i)]
\item {\it Hodge eigenbundles.} The Hodge bundle $E\ra B$ inherits a $G$ action and splits into eigenbundles $E=\bigoplus_{q^m=1}E_q$. These eigenbundles have Chern classes $c_i(E_q)\in H^{2i}(B;\Q)$. 

\item {\it Euler classes.} Each fixed point $z\in S$ corresponds to a component of the fixed set $M^G$ and defines a section $\si_z:B\ra M$. Associated to this section is an Euler class $e_z\in H^2(B;\Q)$, which is defined by $e_z=\si_z^*(e)$, where $e=e(T_\pi M)$ denotes the Euler class of the vertical tangent bundle $T_\pi M\ra M$. 

%In the \emph{stable range}, i.e.\ when the genus of $S$ is sufficiently large, $H^2\big(\Mod(S,Z);\Q\big)$ is generated by the signature class $\si$ (which is proportional to $\ka_1$) and Euler classes $e_z$ for $z\in Z$. The pullback of these classes by the monodromy homomorphism $\pi_1(B)\ra\Mod(S,Z)$ defines classes in $H^2(B;\Q)$. 
\end{enumerate} 

The Chern classes of the Hodge eigenbundles and the Euler classes are related by the following index formula. 

\begin{thm}[{\bf Index formula}]\label{thm:index}
Let $S$ be a closed, oriented surface with an orientation-preserving action of a cyclic group $G$ of order $m$. For concreteness, we fix an identification $G<\C^\ti$. Assume that each point stabilizer is either trivial or equal to $G$. Decompose the fixed set $Z=\sqcup_{j=1}^{m-1} Z_j$ where 
\[Z_j=\{z\in Z: G\text{ acts on $T_zS$ with character } g\mapsto g^j\}.\] Let $\pi:M\ra B$ be an $(S,G)$-bundle. Denote $E=\bop_{q^m=1}E_q$ the decomposition of the Hodge bundle $E\ra B$ into $G$-eigenbundles. For $z\in Z$, let $e_z\in H^2(B;\Q)$ be the corresponding Euler class, and if $Z_j\neq\es$ define $\ep_j=\sum_{z\in Z_j}e_z$. Denoting $\ta_j=\fr{2\pi j}{m}$, for $1\le r\le m-1$, 
\begin{equation}\label{eqn:index1}\sum_{q^m=1}q^r\>\big[\ch(E_q)-\ch(\bar E_{\bar q})\big]=\sum_{\substack{1\le j\le m-1\\Z_j\neq\es}} \coth\left(\fr{\ep_j+i\>r\ta_j}{2}\right).\end{equation}
\end{thm}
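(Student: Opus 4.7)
My plan is to identify the left-hand side of \eqref{eqn:index1} as the equivariant Chern character of a $G$-equivariant families index bundle and apply the families $G$-equivariant Atiyah--Singer index theorem to express it as a sum of contributions localized on the $G$-fixed submanifold of $M$.

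Let $D$ be the elliptic operator from the introduction (a Dolbeault-type operator as in \cite{asiv}) whose families index on $\pi:M\to B$ is $E-\bar E\in K(B)$. Since $\pi$ is an $(S,G)$-bundle, $D$ promotes to a family of $G$-equivariant elliptic operators over $B$, and the equivariant families index $\ind_G(D)$ lives in $R(G)\otimes K(B)$. Its $G$-isotypic decomposition recovers the eigenbundles $E=\bop_q E_q$ and $\bar E=\bop_q \bar E_{\bar q}$. Using the identification $G<\C^\ti$, the equivariant Chern character of the virtual index bundle at the element $g^r$ is
\[
\ch_{g^r}\bigl(E-\bar E\bigr)\;=\;\sum_{q^m=1} q^r\bigl[\ch(E_q)-\ch(\bar E_{\bar q})\bigr],
\]
which is precisely the LHS of \eqref{eqn:index1}.

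Next, apply the families $G$-equivariant index theorem (Atiyah--Singer IV together with Atiyah--Bott/Atiyah--Segal fixed-point techniques). This expresses $\ch_{g^r}(\ind_G D)$ as a pushforward to $B$ of a local density supported on the fixed submanifold $M^{g^r}\sbs M$. Under the standing assumption that point stabilizers are trivial or all of $G$, any nontrivial power $g^r$ fixes the same points in each fiber as $G$, so $M^{g^r}=M^G$ for every $1\le r\le m-1$; this equals the disjoint union of the sections $\si_z:B\to M$ for $z\in Z=\fix(G)$. For $z\in Z_j$, the normal bundle of $\si_z(B)$ in $M$ is the complex line bundle $T_\pi M|_{\si_z(B)}$, with first Chern class $e_z$, on which $G$ acts by the character $g\mapsto g^j$. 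After equivariantization, the role of the formal Chern root $x=e(T_\pi M)$ in the non-equivariant density $x\coth(x/2)$ of \eqref{eqn:indexfamilies} is played by the equivariant Chern class $e_z+i r\ta_j$, and the local contribution at $\si_z$ becomes $\coth\bigl((e_z+i r\ta_j)/2\bigr)$ (the prefactor $x$ is absorbed because pushforward from a section is merely restriction rather than integration along a $2$-dimensional fiber). Summing these contributions over $z\in Z$ and grouping by isotropy character $j$ yields the RHS of \eqref{eqn:index1}.

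The conceptual steps are standard in equivariant index theory; the main obstacle is the bookkeeping. I expect the real work to be (i) setting up $D$ and its equivariant families index carefully enough to trust the identification of the LHS as $\ch_{g^r}(E-\bar E)$, (ii) pinning down the equivariant fixed-point density with the correct signs, factors of $2$, and conversion between the rotation weights $\ta_j=2\pi j/m$ and equivariant Chern classes, and (iii) justifying that the sum of $\coth$-contributions from individual fixed sections can be reassembled into the expression $\coth\bigl((\ep_j+ir\ta_j)/2\bigr)$ with $\ep_j=\sum_{z\in Z_j}e_z$, as displayed in the statement. Once these are in place, \eqref{eqn:index1} follows directly from the equivariant families index theorem.
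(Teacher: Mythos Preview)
Your approach is essentially the paper's: set up the (signature) operator as a $G$-equivariant family, identify the LHS as $\ch_{g^r}$ of its index, and compute the RHS via Atiyah--Segal localization to the fixed sections, where the symbol contribution $\frac{1+T_\pi^*}{1-T_\pi^*}$ on each component $M_j$ yields $\coth\bigl((x_j+ir\ta_j)/2\bigr)$ after taking $\ch\circ\chi_g$.

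Your concern (iii) is well placed and is not something you will be able to ``justify'': the pushforward $p_!$ along the $|Z_j|$-sheeted cover $M_j\to B$ genuinely gives $\sum_{z\in Z_j}\coth\bigl((e_z+ir\ta_j)/2\bigr)$, not $\coth\bigl((\ep_j+ir\ta_j)/2\bigr)$, and these differ beyond degree~$2$. The paper writes $\ep_j=p_!(x_j)$ and then displays $\coth\bigl((\ep_j+ir\ta_j)/2\bigr)$ as shorthand for the pushed-forward class; in the degrees actually used (the constant term and the degree-$2$ term, via the Taylor expansion of $\coth$) the two expressions coincide, so nothing downstream is affected. Just record the honest formula $\sum_{z\in Z_j}\coth\bigl((e_z+ir\ta_j)/2\bigr)$ and note that it agrees with the stated one in low degrees.
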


{\it Remark.} The assumption on point stabilizers implies that $Z_j\neq\es$ only if $\gcd(j,m)=1$. 

Theorem \ref{thm:index} is a families version of the $G$-index theorem for the signature operator. The left-hand side of (\ref{eqn:index1}) is a families version of the $g$-signature $\sig(g,S)$ of \cite{asiii}. For $g=e^{2\pi ir/m}\neq 1$ this index is computed using the Atiyah--Segal localization theorem \cite{asii} and this gives the right-hand side of (\ref{eqn:index1}). The case $g=1$ is special, where one obtains instead the formula from the families index theorem (\ref{eqn:indexfamilies}). Applying (\ref{eqn:indexfamilies}) and (\ref{eqn:index1}) to the universal example relates the stable cohomology of $\Sp^G$ to the subalgebra of $H^*\big(\Mod^G;\Q\big)$ generated by the odd MMM classes and the Euler classes corresponding to the fixed points. The following theorem describes the image of $\al^*$ in the stable range. 

\begin{thm}\label{thm:main} Let $S$ and $G$ be as in Theorem \ref{thm:index}. Assume the genus of $S/G$ is at least 6. For each $1\le j<m/2$ such that $Z_j\cup Z_{m-j}\neq\es$, define $\eta_j$ to be $\ep_j+\ep_{m-j}$ or $\ep_j$ or $\ep_{m-j}$ according to whether $Z_j$ and $Z_{m-j}$ are both nonempty, $Z_j\neq\es$ and $Z_j=\es$, or $Z_j=\es$ and $Z_{m-j}\neq\es$. 
\begin{enumerate}[(i)]
\item The image of $\al ^*:H^2(\Sp^G;\Q)\ra H^2(\Mod^G;\Q)$ is the subspace spanned by $\ka_1$ and $\{\eta_j: 1\le j<m/2\text{ and } Z_j\cup Z_{m-j}\neq\es\}$. 
\item In the stable range, the image of $H^*\big(\Sp^G;\Q\big)\ra H^*\big(\Mod^G;\Q\big)$ is the algebra generated by $\{\kappa_{2i-1}: i\ge1\}$ and $\{\eta_j\}$. 
\end{enumerate} 
\end{thm}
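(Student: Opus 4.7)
The plan is to combine the index formula (Theorem~\ref{thm:index}) and the families index theorem (\ref{eqn:indexfamilies}) with Borel's theorem on the stable rational cohomology of arithmetic groups, together with Harer/Randal-Williams stability for $H^*(\Mod^G;\Q)$. By Borel, stably $H^*(\Sp^G;\Q)$ is the rational cohomology of the compact dual of the associated symmetric space. Writing $\Sp^G \otimes_\Z \R$ as a product of classical real Lie groups according to the $G$-isotypic decomposition of $H_1(S;\R)$ (an $\Sp$-factor for each of the $\pm 1$-isotypic components, and a $\U(p_j,q_j)$-factor for each conjugate pair $\{\omega^j,\omega^{-j}\}$ of nontrivial characters, with $\omega = e^{2\pi i/m}$), one obtains a polynomial-ring description of $H^*(\Sp^G;\Q)$ in terms of the Chern classes of the associated tautological bundles; in particular, $H^2(\Sp^G;\Q)$ is spanned by the first Chern classes $c_1(E_q), c_1(\bar E_q)$ of the Hodge eigenbundles.

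The key step is a Fourier inversion of the index formulas. View equation (\ref{eqn:indexfamilies}) (the $r=0$ case) together with (\ref{eqn:index1}) for $r=1,\ldots,m-1$ as $m$ equations in the ``Fourier transforms'' $F_r := \sum_q q^r[\ch(E_q) - \ch(\bar E_{\bar q})]$. Discrete Fourier inversion over $G$ then recovers each $\ch(E_q) - \ch(\bar E_{\bar q})$, and hence each $\al^*(c_i(E_q))$, as a polynomial in the odd MMM classes $\kappa_{2i-1}$ (arising from $F_0$, since $x\coth(x/2)$ is even in $x$) and in the Euler classes $\ep_j$ (arising from Taylor-expanding $\coth((\ep_j + ir\ta_j)/2)$ in the nilpotent class $\ep_j$). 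The crucial symmetry is that the coefficient of $\ep_j$ in each $F_r$ equals the coefficient of $\ep_{m-j}$: in degree $2$ this is the identity $\csc^2(r\ta_j/2) = \csc^2(r\ta_{m-j}/2)$, which follows from $\ta_{m-j} = 2\pi - \ta_j$ together with the $i\pi$-periodicity of $\coth$; in higher degrees the analogous identity for higher derivatives of $\coth$ applies. Consequently, after Fourier inversion every $\al^*(c_i(E_q))$ depends on $\ep_j$ and $\ep_{m-j}$ only through the symmetric combination $\eta_j = \ep_j + \ep_{m-j}$.

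Combining these steps yields both inclusions in part (ii). The image of $\al^*$ is generated as a $\Q$-algebra by the $\al^*(c_i(E_q))$, which by the Fourier inversion lie in the subalgebra generated by $\{\kappa_{2i-1}\}$ and $\{\eta_j\}$; conversely, solving the same linear system for $\kappa_{2i-1}$ and each $\eta_j$ (with $1\leq j<m/2$, $Z_j \cup Z_{m-j} \neq \es$) as a rational combination of the $\al^*(c_i(E_q))$'s shows these classes lie in the image. Part (i) is the degree-$2$ specialization: expanding $\coth$ to first order in $\ep_j$ reduces the Fourier inversion to an explicit $m \times m$ linear system whose relevant minors are invertible, yielding exactly $\im(\al^*|_{H^2}) = \Span(\{\kappa_1\} \cup \{\eta_j : Z_j \cup Z_{m-j} \neq \es\})$. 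The assumption $g(S/G)\geq 6$ is used to invoke Harer/Randal-Williams stability, ensuring that the MMM and Euler classes behave as expected in the relevant range of $H^*(\Mod^G;\Q)$.

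The main obstacle is the linear algebra of the Fourier inversion: one must verify that the relevant $\csc^2$- (and higher-derivative-of-$\coth$-) determinants are nonzero, so that each $\eta_j$ and each $\kappa_{2i-1}$ indeed lies in the image. A secondary technical point concerns relations among the generators of $H^*(\Sp^G;\Q)$ after pullback: the class $c_1(E_q) + c_1(\bar E_q) = c_1(H^1(S;\C)_q)$ vanishes on $\Mod^G$ because the Gauss--Manin $\Q$-local system, restricted to the $q$-isotypic component, is flat, forcing $\im(\al^*)$ to be strictly smaller than $H^2(\Sp^G;\Q)$; this relation falls out automatically from the Fourier inversion, but is reassuring as a consistency check of the computation.
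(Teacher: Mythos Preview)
Your approach mirrors the paper's: Borel for $H^*(\Sp^G;\Q)$, identification of the generators with first Chern classes of the Hodge eigenbundles, and the degree-$2$ terms of the index formulas (\ref{eqn:indexfamilies}) and (\ref{eqn:index1}) as a linear system relating the $c_1(E_q)$ to $\ka_1$ and the $\eta_j$. But you leave unfinished precisely the step you flag as ``the main obstacle'': showing the $\csc^2$-matrix has full column rank, so that each $\eta_j$ with $Z_j\cup Z_{m-j}\neq\es$ actually lies in $\im(\al^*)$. Discrete Fourier inversion over $G$ handles the \emph{other} direction (expressing each $c_1(E_q)$ in terms of $\ka_1$ and the $\eta_j$); it does not by itself invert the $\csc^2$-block. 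The paper fills this gap by reindexing rows and columns via $(\Z/m\Z)^\ti/\{\pm1\}$ and observing that the resulting matrix is (iterated block-)circulant, whence its eigenvalues are explicit polynomials in roots of unity with coefficients the distinct values $\csc^2(\pi k/m)$, hence nonzero. Without this or an equivalent argument, the surjectivity half of (i) is incomplete.

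Separately, you misattribute the hypothesis $g(S/G)\ge6$. It is not used for Harer/Randal-Williams stability on the $\Mod^G$ side; it is needed to place $H^2(\Sp^G;\Q)$ in Borel's stable range. The paper produces (from the topology of the branched cover $S\ra S/G$) an isotropic subspace showing each irreducible factor $\Ga_k$ of $\Sp^G$ has $F_k$-rank at least $h-1$, and one needs $\lfloor(h-2)/2\rfloor\ge2$ for Borel--Matsushima to apply in degree $2$. Finally, your ``secondary technical point'' is miswritten: with the paper's convention $\bar E_q=\overline{E_q}$, the identity $c_1(E_q)+c_1(\bar E_q)=0$ is a tautology. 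The substantive relation is $c_1(E_q)=c_1(E_{\bar q})$, which the paper proves by comparing the Hodge-star complex structure with the $G$-induced one on the fiber; your flatness heuristic, once reformulated for the flat bundle $H^1(S;\C)_q=E_q\oplus\overline{E_{\bar q}}$, yields the same conclusion.
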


This has the following corollary for the cohomology of the Torelli subgroup of $\Mod^G$. 

\begin{cor}
Let $\ca I^G<\Mod^G$ be kernel of $\Mod^G\ra\Sp^G$. For each $1\le j<m/2$ such that $Z_j\cup Z_{m-j}\neq\es$, the class $\eta_j$ is in the kernel of $H^2(\Mod^G;\Q)\ra H^2(\ca I^G;\Q)$. 
\end{cor}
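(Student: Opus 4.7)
The plan is to derive this corollary directly from Theorem \ref{thm:main}(i). First I will invoke Theorem \ref{thm:main}(i) to conclude that for each admissible $j$, the class $\eta_j\in H^2(\Mod^G;\Q)$ lies in the image of the pullback $\al^*:H^2(\Sp^G;\Q)\ra H^2(\Mod^G;\Q)$; in particular I may write $\eta_j=\al^*(\xi_j)$ for some $\xi_j\in H^2(\Sp^G;\Q)$.

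Next I will use the defining property of the Torelli-like subgroup $\ca I^G$. By hypothesis $\ca I^G=\ker(\al)$, so the composition of the inclusion $\iota:\ca I^G\hra\Mod^G$ with $\al:\Mod^G\ra\Sp^G$ is the trivial homomorphism, which factors through the one-element group. By functoriality, the induced map $(\al\ci\iota)^*$ on $H^2(-;\Q)$ then factors through $H^2(\pt;\Q)=0$. Therefore
\[
\iota^*(\eta_j)\>=\>\iota^*\!\big(\al^*(\xi_j)\big)\>=\>(\al\ci\iota)^*(\xi_j)\>=\>0,
\]
which is exactly the statement that $\eta_j$ lies in the kernel of the restriction $H^2(\Mod^G;\Q)\ra H^2(\ca I^G;\Q)$.

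There is essentially no obstacle here: the corollary is a purely formal consequence of Theorem \ref{thm:main}(i) combined with the elementary fact that classes pulled back along a trivial homomorphism vanish in positive degree. All of the substantive content (the genus-$6$ stable-range computation and the fact that $\eta_j$ is hit by $\al^*$) has already been shouldered by Theorem \ref{thm:main}. The only thing worth double-checking is that the $\eta_j$ appearing in the corollary are defined identically to those of Theorem \ref{thm:main}, which is immediate from inspection of the hypotheses.
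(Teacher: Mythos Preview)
Your proposal is correct and is exactly the intended argument: the paper states this corollary immediately after Theorem \ref{thm:main} without proof, precisely because it is the formal consequence you describe---$\eta_j$ lies in $\im(\al^*)$ by Theorem \ref{thm:main}(i), and any class in $\im(\al^*)$ restricts trivially to $\ca I^G=\ker(\al)$.
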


%{\it Remark.} The genus bound in Theorem \ref{thm:main} is probably not optimal; see Section \ref{sec:compute}. 

\subsection{Applications.} 
In Section \ref{sec:app} we discuss some applications of the index formula: 

\vspace{.1in} 
{\bf Geometric characteristic classes after Church--Farb--Thibault.} Following \cite{cft}, a characteristic class $c\in H^k\big(B\Diff(F)\big)$ is called \emph{geometric with respect to cobordism} if two $F$ bundles $M\ra B^k$ and $M_1\ra B_1^k$ have the same characteristic numbers $c^{\#}(M\ra B)=c^{\#}(M_1\ra B_1)$ whenever the manifolds $M$ and $M_1$ are cobordant. In particular, such a characteristic class is insensitive to the fibering $M\ra B$. In the case of an $(S,G)$-surface bundle $M\ra \Si$ over a surface $\Si$, the classes $\ka_1$ and $\ep_j$ are $G$-cobordism invariants of $M$, so as a consequence of Theorems \ref{thm:index} and \ref{thm:main}, we obtain

\begin{cor}\label{cor:bord}
Fix $G\sbs\C^\ti$ with $|G|=m$. Let $M\ra \Si$ be an $(S,G)$-bundle over a surface. For each $q^m=1$, the characteristic numbers $c_1^{\#}(E_q\ra\Si)$ of the Hodge eigenbundle $E_q\ra \Si$ are $G$-cobordism invariants, i.e.\ they depend only on the $G$-cobordism class of $M$.  
\end{cor}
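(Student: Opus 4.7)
\emph{Proof plan.} The plan is to use the $G$-index formula (Theorem \ref{thm:index}) together with its $g=1$ companion, equation (\ref{eqn:indexfamilies}), in cohomological degree two to express each $c_1(E_q)$ as a $\Q$-linear combination of $\ka_1$ and the Euler classes $\{\ep_j: Z_j\neq\es\}$. Once this is done, we are finished, because $\ka_1^\#$ and $\ep_j^\#$ are already $G$-cobordism invariants of $M$: the former via Hirzebruch's formula $\int_M e(T_\pi M)^2 = \int_M p_1(TM) = 3\tau(M)$, and the latter via the Stokes/Chern--Weil observation that the self-intersections of the $2$-dimensional components of $M^G$, grouped by normal $G$-character, are preserved under any $G$-cobordism (apply Stokes to a closed $2$-form representative of $e$ on the $3$-dimensional character-$j$ piece of $W^G$).

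To execute the plan, extract the degree-two components of both sides of (\ref{eqn:index1}) and (\ref{eqn:indexfamilies}). Since $\bar E_{\bar q}=\ov{E_q}$ as complex bundles, $c_1(\bar E_{\bar q}) = -c_1(E_q)$, so the degree-two part of the left-hand side of (\ref{eqn:index1}) is $2\sum_{q^m=1}q^r c_1(E_q)$. A first-order Taylor expansion of $\coth\bigl((\ep_j+ir\ta_j)/2\bigr)$ in $\ep_j$, using $\coth'(z) = -\csch^2(z)$ and $\csch^2(it) = -\csc^2(t)$, identifies the degree-two part of the right-hand side as $\sum_j \ep_j/\bigl(2\sin^2(\pi rj/m)\bigr)$. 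The missing $r=0$ equation comes from (\ref{eqn:indexfamilies}): the expansion $x/\tanh(x/2)=2+x^2/6+\cdots$ pushes forward to $\sum_q c_1(E_q) = \ka_1/12$.

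The resulting $m\times m$ linear system for $(c_1(E_q))_{q\in\mu_m}$, indexed by $r=0,\ldots,m-1$, has coefficient matrix $(q^r)_{r,q}$, the invertible discrete Fourier matrix. Pairing rows $r$ and $m-r$ (under which the rows and the $\sin^2(\pi rj/m)$ denominators are symmetric) shows the inverted combinations have rational coefficients, so each $c_1(E_q)$ is a $\Q$-linear combination of $\ka_1$ and the $\{\ep_j\}$; integrating over $\Si$ writes $c_1^\#(E_q\ra\Si)$ as the same combination of $\ka_1^\#(M\ra\Si)$ and $\ep_j^\#(M\ra\Si)$, which are $G$-cobordism invariants. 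The only real substance lies in the preliminary cobordism invariance of $\ka_1^\#$ and $\ep_j^\#$ noted at the outset; given those, the $G$-index theorem reduces the corollary to the routine Fourier inversion above.
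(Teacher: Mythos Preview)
Your approach is essentially the paper's: both proofs reduce to expressing $c_1(E_q)$ as a linear combination of $\ka_1$ and the Euler classes attached to the fixed set, and then observe that $\ka_1^\#=3\sig(M)$ and the self-intersections of the character-$j$ pieces of $M^G$ are $G$-cobordism invariants. The paper packages the first step by citing Theorem~\ref{thm:main}; you redo the linear algebra directly from the index formulas, which has the virtue of not invoking the genus hypothesis in that theorem. Your Stokes/Chern--Weil argument for the vanishing of $\ep_j^\#$ when $M=\pa W$ is a de~Rham paraphrase of the paper's transversality argument (push the normal Euler form to the $3$-manifold $W_j^G$ and integrate).

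There is, however, a slip in the degree-two extraction. In the paper's notation $\bar E_{\bar q}$ means $\ov{E_{\bar q}}$, the conjugate of the $\bar q$-eigenbundle, not $\ov{E_q}$; hence the degree-two left-hand side of (\ref{eqn:index1}) is
\[
\sum_{q^m=1} q^r\big(c_1(E_q)+c_1(E_{\bar q})\big)=\sum_{q^m=1}(q^r+\bar q^r)\,c_1(E_q),
\]
not $2\sum_q q^r c_1(E_q)$. The coefficient matrix is therefore not the DFT matrix but $(q^r+\bar q^r)_{r,q}$, whose rows $r$ and $m-r$ coincide; Fourier inversion as you describe fails, and the system only determines the sums $c_1(E_q)+c_1(E_{\bar q})$. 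The paper closes this gap via Proposition~\ref{prop:chern}, which shows $c_1(E_q)=c_1(E_{\bar q})$; with that identity the system collapses to the square system analyzed in \S\ref{sec:compute}, whose invertibility is the ``$J$ is invertible'' proposition there. Once you add that one input, your argument goes through. (Your remark about rational coefficients is then also unnecessary: any linear combination of $G$-cobordism invariants, rational or not, is a $G$-cobordism invariant.)
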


For example, the standard Atiyah--Kodaira example is a surface bundle $S_6\ra M\ra S_{129}$ with a fiberwise $G=\Z/2\Z$ action. The manifold $M$ also fibers as $S_{321}\ra M\ra S_3$, and we have Hodge eigenbundles 
\[\C^3\ra E_1\ra S_{129}\>\>\>\text{ and }\>\>\>\C^3\ra E_{-1}\ra S_{129}\>\>\>\>\>\>,\>\>\>\>\>\>\C^{104}\ra E_1'\ra S_3\>\>\>\text{ and }\>\>\>\C^{217}\ra E_{-1}'\ra S_3.\]
The corollary says that 
\[\big\lan c_1(E_1),[S_{129}] \big\ran=\big\lan c_1(E_1'), [S_3]\big\ran\>\>\>\text{ and } \>\>\> \big\lan c_1(E_{-1}),[S_{129}] \big\ran=\big\lan c_1(E_{-1}'),[S_3]\big\ran,\]
where $\lan\cdot,\cdot\ran$ denotes the pairing between cohomology and homology.
\vspace{.1in}

{\bf Surface group representations.} Any surface bundle $E\ra\Si$ with monodromy in $\Mod^G$ induces surface group representations $\pi_1(\Si)\ra H$ where $H$ is either $\Sp_{2k}(\R)$ or $\SU(a,b)$. The Toledo invariant of this representation coincides with the Chern class $c_1(E_q)$ of one of the Hodge eigenbundles, so these Toledo invariants may be computed by the index formula. For example, the Atiyah--Kodaira construction for $G=\Z/7\Z$ can be used to produce a representation 
\[\al :\pi_1(\Si_{7^{17}+1})\ra\SU(8,13)\ti\SU(9,12)\ti\SU(10,11).\]
whose Toledo invariants (obtained by projecting to individual factors) are all nonzero, distinct, and can be computed using (\ref{eqn:index1}). See \S \ref{sec:app}. 

\vspace{.1in} 
{\bf Hirzebruch's formula for signature of branched covers.} The index formula can be used to express how the odd MMM classes behave under fiberwise branched covers. 
%This recovers the formulas of Kawazumi--Uemura \cite{kawazumi-uemura} (
In the case of $\kappa_1$ this allows us to derive Hirzebruch's formula for the signature of a branched cover. Our derivation of Hirzebruch's formula emphasizes the connection to arithmetic groups. 

\vspace{.1in} 
{\bf Remark on proofs.} Theorem \ref{thm:index} is a version of the $G$-index theorem for families and is obtained by combining the results of \cite{asii,asiii,asiv}. As far as the author knows this does not appear in the literature, although it is surely known to experts (see the last sentence of \cite[\S5]{asiv}). Theorem \ref{thm:main} is proved by computing the stable cohomology of $\Sp^G$ (following Borel), relating this cohomology and Chern classes of the Hodge eigenbundles, and applying the index formula to reduce the problem to the linear algebra of circulant matrices. 

\subsection{Outline of paper.} Section \ref{sec:mod} contains some facts about subgroups of $\Mod(\bar S)$ that lift a cover, and a result of Earle--Schatz that describes the classifying space for $(S,G)$ bundles. The index formula of Theorem \ref{thm:index} is proved in Section \ref{sec:index}. In Section \ref{sec:compute} we prove Theorem \ref{thm:main}, and in Section \ref{sec:app} we discuss the applications mentioned above. 

{\it Acknowledgement.} 
The author would like to thank O.\ Randal-Williams for several useful conversations about index theory and for his interest in this project. The author thanks T.\ Church, S.\ Galatius, N. Salter, and L.\ Starkson for helpful conversations. Thanks also to B.\ Farb and N.\ Salter for comments on a draft of this paper. 

\section{Mapping class groups, subgroups, and surface bundles}\label{sec:mod}

%In this section $G$ will be a finite group (not necessarily cyclic). 

For a closed surface $S$ and any manifold $B$, there is a well-known bijection between the collection of homomorphisms $\pi_1(B)\ra\Mod(S)$ (up to conjugacy) and the collection of $S$-bundles $M\ra B$ (up to bundle isomorphism). In this section we record an equivariant version of this fact that gives a monodromy characterization of $(S,G)$-bundles. Then we recall the definition of \emph{liftable subgroups} $\Mod_\mu(\bar S,\bar Z)<\Mod(\bar S,\bar Z)$ associated to a branched cover $\mu:S\ra\bar S$ with branched set $\bar Z\sbs\bar S$. We show that $\Mod_\mu(\bar S,\bar Z)$ is finite index in $\Mod(\bar S,\bar Z)$ and show that it virtually lifts to $\Mod(S,Z)$. 

\subsection{Classifying $(S,G)$-bundles.} 
One of the miracles in the study of surface bundles is that a surface bundle $B\ra B\Diff(S)$ is determined by its monodromy, i.e.\ by the induced homomorphism 
\[\pi_1(B)\ra\pi_1\big(B\Diff(S)\big)\simeq\pi_0\big(\Diff(S)\big)\equiv\Mod(S).\] For an $(S,G)$-bundle $M\ra B$, the classifying map $B\ra B\Diff(S)$ factors through $B\Diff(S)^G$, where $\Diff(S)^G$ is the centralizer of $G<\Diff(S)$.\footnote{We use this notation because $\Diff(S)^G$ is the fixed point set of $G$ acting on $\Diff(S)$ by conjugation.} In this case, we have a homomorphism
\[\pi_1(B)\ra\pi_0\big(\Diff(S)^G\big)\ra\Mod(S)^G,\]
where $\Mod(S)^G$ is the centralizer of (the image of) $G<\Mod(S)$. The following theorem says that an $(S,G)$ bundle $M\ra B$ is determined by its monodromy $\pi_1(B)\ra\Mod(S)^G$. This theorem is a consequence of Earle--Schatz \cite{earle-schatz}.

\begin{thm}\label{thm:es}
Fix a closed surface $S$ with an action of a finite group $G$. For each manifold $B$, there is a bijection 
\[
\left\{\begin{array}{cc}(S,G)\text{-bundles}\\M\ra B\\\text{up to isomorphism}\end{array}\right\}\leftrightarrow \left\{\begin{array}{cc}\text{homomorphisms}\\\pi_1(B)\ra\Mod(S)^G\\\text{up to conjugacy}\end{array}\right\}
\]
\end{thm}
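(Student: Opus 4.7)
The plan is to identify the classifying space for $(S,G)$-bundles and then recognize it as an Eilenberg--MacLane space.

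First, I would show that isomorphism classes of $(S,G)$-bundles over $B$ are in bijection with homotopy classes of maps $[B, B\Diff(S)^G]$. This is a direct consequence of the definition: the local trivializations $\phi_\al$ identify the $G$-action on each fiber with the fixed embedding $i:G\hra\Diff(S)$, so the transition functions $\phi_\al\ci\phi_\be^{-1}$ on $U_\al\cap U_\be$ must commute with $i(G)$ and therefore lie in $\Diff(S)^G$. Thus an $(S,G)$-bundle is exactly a principal $\Diff(S)^G$-bundle with associated fiber $S$, and standard bundle theory gives the usual classifying-space bijection.

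Next, I would invoke the Earle--Schatz theorem \cite{earle-schatz}, already cited in the excerpt, which asserts that each connected component of $\Diff(S)^G$ is contractible (provided the genus of $S$ is at least $2$; in the exceptional low-genus cases one appeals to direct arguments). This implies that the natural map $B\Diff(S)^G\ra B\Mod(S)^G$ is a homotopy equivalence, so $B\Diff(S)^G$ is a $K\big(\Mod(S)^G,1\big)$.

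Finally, I would apply the standard fact that for any CW complex $B$ and any discrete group $\pi$, there is a bijection
\[
[B,\>K(\pi,1)]\;\longleftrightarrow\;\Hom\big(\pi_1(B),\pi\big)/\pi,
\]
where $\pi$ acts by conjugation. Combining this with the previous two steps yields the stated bijection between $(S,G)$-bundles over $B$ up to isomorphism and homomorphisms $\pi_1(B)\ra\Mod(S)^G$ up to conjugacy.

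The main technical obstacle is the Earle--Schatz input, which is the only nontrivial ingredient; everything else is formal from the definitions and classical classifying-space theory. I would simply cite \cite{earle-schatz} and note, if needed, the standard caveat that low-genus exceptions do not affect our applications (since the index-theoretic applications in later sections involve surfaces with $\text{genus}\ge 2$ or are reduced to such cases by passing to a finite-index subgroup).
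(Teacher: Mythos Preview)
Your proposal is correct and follows essentially the same route as the paper: both reduce the statement to the Earle--Schatz result that the identity component $\Diff_0(S)\cap\Diff(S)^G$ is contractible, so that $B\Diff(S)^G\simeq B\Mod(S)^G$ is a $K(\pi,1)$, and then invoke the standard classification of maps into an Eilenberg--MacLane space. Your write-up is slightly more explicit than the paper's (spelling out why the transition functions lie in $\Diff(S)^G$ and the $[B,K(\pi,1)]$ bijection), but the argument is the same.
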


\begin{proof}
Let $\Diff_0(S)$ denote the path component of the identity in $\Diff(S)$. There is a fiber sequence 
\[\Diff_0(S)\cap\Diff(S)^G\ra\Diff(S)^G\ra\Mod(S)^G\]
(which is also an exact sequence of groups). By \cite[\S5(F)]{earle-schatz}, the topological group $\Diff_0(S)\cap\Diff(S)^G$ is contractible, which implies that $B\Diff(S)^G\ra B\Mod(S)^G$ is a homotopy equivalence, and the theorem follows. 
\end{proof}

{\it Remarks.} 
\begin{enumerate}
\item The theorem of Earle--Schatz says, in particular, that $\Diff(S)^G\cap\Diff_0(S)$ is connected, so if $\phi\in\Diff(S)^G$ is isotopic to the identity, then it has an isotopy through diffeomorphisms that commute with $G$ (compare with \cite{birman-hilden}). Consequently the surjection $\pi_0\big(\Diff(S)^G\big)\ra\Mod(S)^G$ is an isomorphism. 

\item By Theorem \ref{thm:es}, if the monodromy $\pi_1(B)\ra\Mod(S)$ of an $S$-bundle $M\ra B$ factors through $\Mod(S)^G$, then $M\ra B$ has the structure of a $(S,G)$-bundle. Without the theorem of Earle--Schatz, it is not obvious why a bundle with monodromy $\Mod(S)^G$ admits a fiberwise $G$-action. 

\item As a consequence of (the proof of) Theorem \ref{thm:es}, the cohomology $H^*\big(\Mod(S)^G\big)$ can be identified with the ring of characteristic classes of $(S,G)$-bundles. 
\end{enumerate} 

{\it Important remark.} In this note we will be interested in $(S,G)$-bundles that are classified by maps $B\ra B\Diff(S,Z)^G$, where $Z\sbs S$ is the fixed set of $G$. Recall that $\Diff(S,Z)$ denotes the group of diffeomorphisms that restrict to the identity on $Z$ (in general an element of $\Diff(S)^G$ need only preserve $Z$). The above discussion implies that these $(S,G)$-bundles are also classified by their monodromy $\pi_1(B)\ra\Mod(S,Z)^G$. Note that $\Mod(S)^G$ and $\Mod(S,Z)^G$ differ by a finite group. 

\subsection{Liftable subgroups.} For a finite regular $G$-cover $\mu:S\ra\bar S$ branched over $\bar Z$ with $Z=\mu^{-1}(\bar Z)$, define the \emph{liftable subgroup} 
\[\Mod_\mu(\bar S,\bar Z)=\{[f]\in\Mod(\bar S,\bar Z)\mid f \text{ admits a lift }\wtil f\in\Diff(S,Z)^G\}.
\]
By definition, there is an exact sequence 
\begin{equation}\label{eqn:ses}1\ra G\ra\Mod(S,Z)^G\ra\Mod_{\mu}(\bar S,\bar Z)\ra1.\end{equation}
Our goal in this subsection is to explain the following proposition, which is a modification of an argument of Morita \cite[Lemma 4.13]{morita_book} to the case of a branched cover. (Note: although our standing assumption is that $G$ is cyclic, this proposition does not require this.)

\begin{prop}\label{prop:lift}Let $G$ be a finite group, and fix a regular $G$-cover $\mu:S\ra\bar S$ branched over $\bar Z\sbs\bar S$. Assume $S$ is closed and $\chi(\bar S\sm\bar Z)<0$. Then the liftable subgroup $\Mod_\mu(\bar S,\bar Z)<\Mod(\bar S,\bar Z)$ is finite index and contains a finite-index subgroup that maps to $\Mod(S,Z)^G$. 
\end{prop}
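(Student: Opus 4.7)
The plan is to adapt Morita's argument for unbranched covers by passing to the restricted unbranched cover $S\sm Z\ra\bar S\sm\bar Z$ and working with $\pi_1(\bar S\sm\bar Z)$. There are two independent claims to verify: that $\Mod_\mu(\bar S,\bar Z)$ has finite index in $\Mod(\bar S,\bar Z)$, which I would obtain from a finiteness statement about the covering data that $\Mod(\bar S,\bar Z)$ permutes; and that there is a virtual lift into $\Mod(S,Z)^G$, which I would obtain by splitting the extension \eqref{eqn:ses} over a finite-index subgroup using the residual finiteness of mapping class groups.

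For the finite-index statement, the regular cover $\mu$ restricts to an unbranched $G$-cover $S\sm Z\ra\bar S\sm\bar Z$ corresponding to a surjection $\rho\colon\pi_1(\bar S\sm\bar Z)\onto G$ with kernel $N$. Since $f\in\Diff(\bar S,\bar Z)$ fixes $\bar Z$ pointwise, it restricts to a self-diffeomorphism of $\bar S\sm\bar Z$ and induces a well-defined outer automorphism, so there is a homomorphism $\Mod(\bar S,\bar Z)\ra\Out\big(\pi_1(\bar S\sm\bar Z)\big)$; the assumption $\chi(\bar S\sm\bar Z)<0$ ensures this map has the good behavior guaranteed by Dehn--Nielsen--Baer. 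The set of surjections $\pi_1(\bar S\sm\bar Z)\onto G$ modulo $\Aut(G)$ is finite since $\pi_1$ is finitely generated and $G$ is finite, and $\Mod(\bar S,\bar Z)$ permutes it; the stabilizer $\Mod_\rho$ of $[\rho]$ is therefore finite index and consists precisely of the mapping classes that lift to $\Diff(S\sm Z)$. Within $\Mod_\rho$, assigning to $[f]$ the conjugation action on the deck group $G$ of any lift $\wtil f$ defines a homomorphism $\Mod_\rho\ra\Out(G)$ (well-defined because any two lifts differ by a deck transformation, which changes the induced automorphism of $G$ only by an inner automorphism, and a homomorphism because composing lifts composes the conjugation actions). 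Its kernel is precisely $\Mod_\mu(\bar S,\bar Z)$, and since $\Out(G)$ is finite, $\Mod_\mu$ is finite index in $\Mod_\rho$, hence in $\Mod(\bar S,\bar Z)$.

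For the second claim I would split the extension \eqref{eqn:ses} over a finite-index subgroup using Grossman's theorem that $\Mod(S,Z)$ is residually finite (which passes to the subgroup $\Mod(S,Z)^G$). For each non-identity element $g$ in the kernel of the projection $p\colon\Mod(S,Z)^G\ra\Mod_\mu(\bar S,\bar Z)$, choose a finite-index normal subgroup $K_g\triangleleft\Mod(S,Z)^G$ not containing $g$, and let $K$ be the intersection of these finitely many $K_g$. Then $K$ has finite index in $\Mod(S,Z)^G$ and satisfies $K\cap\ker p=\{1\}$, so $p|_K$ is injective; its image $\Ga:=p(K)$ has finite index in $\Mod_\mu$, and $(p|_K)^{-1}\colon\Ga\ra\Mod(S,Z)^G$ is the desired lift.

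The main technical point requiring care is the passage from lifts of $f$ defined on $S\sm Z$ to diffeomorphisms of $S$ lying in $\Diff(S,Z)^G$. A lift $\wtil f\in\Diff(S\sm Z)$ extends continuously over the punctures and permutes each fiber $\mu^{-1}(\bar z)$ (since $f$ fixes $\bar z$); under the standing assumption $\mu^{-1}(\bar Z)=\fix(G)$ these fibers are singletons, so the extension automatically fixes $Z$ pointwise, and a further adjustment by a deck transformation (available precisely when the $\Out(G)$-obstruction vanishes) makes it commute with $G$. Verifying that the various obstruction maps are genuine homomorphisms to finite groups, and that the $\Out(G)$-class of the induced automorphism is tracked correctly through the chain of finite-index restrictions, is where the bookkeeping is most delicate.
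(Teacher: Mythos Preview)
Your argument is correct. The finite-index step is essentially the paper's argument in different clothing: both reduce to the finiteness of index-$|G|$ subgroups of $\pi_1(\bar S\sm\bar Z)$ together with the finiteness of $\Aut(G)$ (you package the second step as a homomorphism to $\Out(G)$, the paper as the action on $T/\hat T$). The genuine divergence is in the virtual lift. The paper follows Morita closely: it passes to the \emph{pointed} mapping class groups, uses the Birman exact sequences for $\Sigma=\bar S\sm\bar Z$ and $\hat\Sigma=S\sm Z$, realizes $\Mod_\mu(\Sigma,*)$ inside $\ca A(T)$ via type-preserving automorphisms (invoking Maclachlan--Harvey), and then appeals to Morita's Lemma~4.13 to produce a finite-index $\Ga<\Mod_\mu(\Sigma,*)$ with $\Ga\cap T=\{e\}$, so that $\Ga$ embeds in $\Mod(\Sigma)$ while still mapping to $\Mod(\hat\Sigma)^G$. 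Your route is more direct: you work with the extension~\eqref{eqn:ses} itself and use residual finiteness of $\Mod(S,Z)$ (hence of $\Mod(S,Z)^G$) to separate the finite kernel $G$, obtaining a finite-index $K$ with $K\cap G=\{1\}$ and inverting $p|_K$. Your approach is shorter and avoids the Birman-sequence bookkeeping and the Maclachlan--Harvey input; the paper's approach has the mild advantage of making the lift explicit at the level of automorphisms of $\pi_1$ and of staying parallel to Morita's original unbranched argument. Your caveat about extending lifts across $Z$ is well placed; the paper handles this by the blanket identification $\Mod(\hat\Sigma)^G\simeq\Mod(S,Z)^G$ without further comment.
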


\begin{proof}
Remove $\bar Z$ and $Z=\mu^{-1}(\bar Z)$ to get an unbranched regular cover $\hat\Si\ra\Si$ (between noncompact surfaces). Consider the induced exact sequence 
\[1\ra\hat T\ra T\xra{q}G\ra 1,\] where $\hat T=\pi_1(\hat\Si)$ and $T=\pi_1(\Si)$. Give $T$ the standard presentation $T=\lan a_1,b_1,\ld,a_g,b_g,p_1,\ld,p_n\mid\prod[a_i,b_i]\prod p_j=1\ran$, where $p_j$ is a loop around the $j$-th puncture (here $n=|Z|$). By assumption $\chi(\Si)<0$, so we may realize $T$ as a Fuchsian group $T<\psl_2(\R)$ so that the $a_i,b_i$ are hyperbolic and the $p_j$ are parabolic. An automorphism $\phi\in\Aut(T)$ is called \emph{type-preserving} if it preserves hyperbolic (resp.\ parabolic) elements. Denote $\ca A(T)<\Aut(T)$ the group of type-preserving automorphisms. By \cite[Theorem 1]{hm} there are isomorphisms
\[\ca A(T)\simeq\Mod(\Si,*)\>\text{ and }\>\ca A(T)/\inn(T)\simeq\Mod(\Si),\]
where $*\in\Si$ is a basepoint. Similarly, we can define $\ca A(\hat T)$, and we have isomorphism $\ca A(\hat T)\simeq\Mod(\hat\Si,*)$ and $\ca A(\hat T)/\inn(T)\simeq\Mod(\hat\Si)$. 

Consider the group 
\[\Mod_\mu(\Si,*)=\{\phi\in\ca A(T): \phi(\hat T)=\hat T\text{ and } q\circ\phi=q\}.\] By definition, we have a homomorphism $\Mod_\mu(\Si,*)\ra\ca A(\hat T)$. Transferring from group theory to topology, we have the following diagram.  
\[\begin{xy}
(0,0)*+{1}="A";
(25,0)*+{\hat T}="B";
(50,0)*+{\Mod(\hat\Si,*)}="C";
(75,0)*+{\Mod(\hat\Si)}="D";
(100,0)*+{1}="E";
(0,-30)*+{1}="F";
(25,-30)*+{T}="G";
(50,-30)*+{\Mod(\Si,*)}="H";
(75,-30)*+{\Mod(\Si)}="I";
(100,-30)*+{1}="J";
(50,-15)*+{\Mod_\mu(\Si,*)}="K";
(15,-15)*+{\Mod_\mu(\Si,*)\cap T}="L";
{\ar"A";"B"}?*!/_3mm/{};
{\ar "B";"C"}?*!/^3mm/{};
{\ar "C";"D"}?*!/_3mm/{};
{\ar"D";"E"}?*!/_3mm/{};
{\ar"F";"G"}?*!/_3mm/{};
{\ar "G";"H"}?*!/^3mm/{};
{\ar "H";"I"}?*!/_3mm/{p};
{\ar"I";"J"}?*!/_3mm/{};
{\ar@{^{(}->}"L";"K"}?*!/_3mm/{};
{\ar "K";"C"}?*!/_3mm/{};
{\ar@{^{(}->}"K";"H"}?*!/_3mm/{};
{\ar@{-->}"K";"D"}?*!/^3mm/{r};
\end{xy}\]

$\Mod_\mu(\Si,*)<\ca A(T)\simeq\Mod(\Si,*)$ is finite index because $T$ has finitely many subgroups of index $|G|$ (permuted by $\ca A(T)$), the stabilizer of $\hat T$ acts on $G=T/\hat T$, and the group $\aut(G)$ is finite.

By the argument in \cite[Lemma 4.13]{morita_book}, there is a finite-index subgroup $\Ga<\Mod_\mu(\Si,*)$ so that $\Ga\cap T=\{e\}$, so $\Ga\hra\Mod(\Si)$; furthermore, since $\Mod_{\mu}(\Si,*)<\Mod(\Si,*)$ is finite index and $p$ is surjective, $\Ga$ is finite index in $\Mod(\Si)$. By construction we have finite-index subgroups $\Ga<\Mod_\mu(\Si)<\Mod(\Si)$ with a homomorphism $r:\Ga\ra\Mod(\hat\Si)^G$. Since $\Mod(\Si)\simeq\Mod(\bar S,\bar Z)$ and $\Mod(\hat\Si)^G\simeq\Mod(S,Z)^G$, this completes the proof. 
\end{proof}

{\it Remark.} The groups $\Mod(S,Z)^G$ and $\Mod_\mu(\bar S,\bar Z)$ have the same rational cohomology because of the exact sequence (\ref{eqn:ses}); this follows easily from examining the associated spectral sequence, since $H^*(G;\Q)$ is trivial. Our original interest in studying $H^*(\Sp^G;\Q)\ra H^*\big(\Mod(S)^G;\Q\big)$ was to determine if the isomorphism $H^*\big(\Mod_\mu(\bar S,\bar Z);\Q\big)\simeq H^*\big(\Mod(S,Z)^G;\Q\big)$ produced any cohomology in the cokernel of the injection $H^*(\Mod(\bar S,\bar Z);\Q)\hra H^*\big(\Mod_\mu(\bar S,\bar Z);\Q\big)$. Unfortunately this is not the case (at least stably) by Theorem \ref{thm:main}. 

\section{The index formula}\label{sec:index}

In this section $G\simeq\Z/m\Z$ and for concreteness we fix an identification $G<\C^\ti$.

The goal of this section is to prove Theorem \ref{thm:index} by deriving the index formula (\ref{eqn:index1}). To the author's knowledge, this derivation (of a families version of the $G$-index theorem for the signature operator) is not detailed in the literature, although it can be obtained by combining the contents of \cite{asi,asii,asiii,asiv}. Since these references are quite accessible, we will be brief and refer the reader to these papers for more detail. 

Let $M\xra\pi B$ be an $(S,G)$-bundle, and introduce a $G$-invariant fiberwise Riemannian metric. Denoting $S_b=\pi^{-1}(b)$ for $b\in B$, we have the de Rham complex $\Om_\C^*(S_b)$, its exterior derivative $d$, the adjoint $d^*$ of $d$ (defined using the Hodge star operator $\star$), and a self-adjoint elliptic operator $D=d+d^*$. The operator $\tau:\Om_\C^p\ra\Om_\C^{2-p}$ defined by $\tau=i^{p(p-1)+1}\star$ satisfies $\tau^2=1$ and $D\tau=-\tau D$, so $D$ restricts to operators $D^\pm:\Om^\pm\ra\Om^\mp$ on the $\pm1$ eigenspaces $\Om^\pm$ of $\tau$. The operators $D^+$ and $D^-$ are mutually adjoint, and $\ker(D^\pm)$ is the $\pm1$ eigenspace of $\tau$ acting on harmonic forms $\ca H^*(S_b;\C)$.

The collection $D_b^+: \Om^+(S_b)\ra\Om^-(S_b)$ for $b\in B$ defines a family of $G$-invariant differential operators on $S$. The \emph{(analytic) index} of the family $D=\{D_b^+\}$ is defined as
\[\ind(D)= E^+-E^-\in K_G(B),\]
where $E^\pm$ is the (equivariant K-theory class of the) bundle whose fiber over $b\in B$ is $\ca H^\pm(S_b;\C)$. 

The index theorem gives a topological description of the index: associated to $D$ is a \emph{symbol class} $\si\in K_G(T_\pi M)$, where $T_\pi M\ra M$ is the vertical (co)tangent bundle\footnote{The tangent and cotangent bundles are isomorphic.} of $\pi:M\ra B$ (and $K_G(\cdot)$ denotes equivariant K-theory with compact supports). In \cite{asiii} (see also \cite[pg.\ 40]{shanahan} and \cite[pgs.\ 236, 264]{lawmich}) it is shown that 
\[\si=\Om^+-\Om^-=(1+\bar L)-(1+L)=\bar L-L\in K_G(T_\pi M),\] where $L$ is the pullback of $T_\pi M\ra M$ along $T_\pi M\ra M$. The Thom isomorphism $K(M)\ra K(T_\pi M)$ is given by multiplication by the Thom class $u\in K(T_\pi M)$, and in this case $u=1-L$. Note that $(1+\bar L)(1-L)=\bar L-L$. Thus under $\pi_!:K(T_\pi^* M)\ra K(M)$ we have 
\[\si=\bar L-L=(1+\bar L)(1-L)\mapsto 1+T_\pi^*M.\]
The topological index is defined as $\tind=\pi_!(\si)$, where $\pi_!:K_G(M)\ra K_G(B)$ is the push-forward in K-theory. By the index theorem, $\ind(D^+)=\tind$, so 
\begin{equation}\label{eqn:indexformula}
\boxed{
E^+-E^-=\pi_!(1+T_\pi^*).}
\end{equation}

We want to understand (\ref{eqn:indexformula}) on the level of ordinary cohomology, i.e.\ under the map 
\begin{equation}\label{eqn:chernch}\ch_g:K_G(B)\simeq K(B)\ot R(G)\xra{1\ot\chi_g}K(B)\ot\C\xra{\ch} H^*(B)\ot\C,\end{equation}
where $R(G)$ is the representation ring and $\chi_g:R(G)\ra\C$ is the ring homomorphism that sends a representation $V$ to its character $\chi_g(V)$. 

The image of the left-hand side of (\ref{eqn:indexformula}) under (\ref{eqn:chernch}) is easily expressed. In $K(B)\ot R(G)$, we have 
\[E^+=\sum_{q^m=1} E_q\ot\rho_q\>\>\>\text{ and }\>\>\>E^-=\sum_q \bar E_q\ot\rho_{\bar q},\] 
where $\bar E_q$ denotes the conjugate bundle, $\bar q$ denotes the complex conjugate of $q\in\C$, and $\rho_q$ is the $\C G$ module $\C[x]/(x-q)$. It follows that   
\begin{equation}\label{eqn:lhs}\ch_g\big(E^+-E^-\big)=\sum_{q^m=1}\big(\ch(E_q)-\ch(\bar E_{\bar q})\big)\cdot\chi_g(\rho_q).\end{equation}
Observe that for $g=e^{2\pi i r/m}$, we have $\chi_{g}(\rho_q)=q^r$. 

In the remainder of the section we compute $\ch_g\big(\pi_!(1+T_\pi^*)\big)$. The pushforward $\pi_!$ in K-theory is hard to understand directly, so we want to commute $\ch$ and $\pi_!$, and compute $\pi_!$ in ordinary cohomology. This can be done after passing to the fixed point set $M^g$, using the Atiyah--Segal localization theorem \cite{asii}. 

\vspace{.1in}

{\it Atiyah--Segal localization theorem.} The character homomorphism $\chi_g:R(G)\ra\C$ factors through the localization $R(G)_{g}$ at the (prime) ideal $\ker(\chi_g)$. Denoting $K_G(M)_{g}$ the localization of the $R(G)$ module $K_G(M)$, there is a commutative diagram
\[\begin{xy}
(-20,0)*+{K_G(M)_g}="A";
(20,0)*+{K_G(M^g)_g}="B";
(-20,-20)*+{K_G(B)_g}="C";
(20,-20)*+{K_G(B)_g}="D";
{\ar"A";"B"}?*!/_3mm/{i^*/e};
{\ar "B";"D"}?*!/_3mm/{p_!};
{\ar "A";"C"}?*!/^3mm/{\pi_!};
{\ar@{=}"C";"D"}?*!/_3mm/{};
\end{xy}\]
Here $i^*$ is induced by $i:M^g\hra M$, the map $p$ is the restriction $\pi\rest{}{M^g}$, and $e=1-T_\pi^*\in K(M^g)$ is the Thom class (in K-theory) of the normal bundle of $M^g\hra M$. The top arrow is an isomorphism by \cite[Proposition 4.1]{segal_ekt}. Dividing by the Euler class makes the diagram commute (compare \cite[pg.\ 261]{lawmich}). 

In the diagram above, $\si=1+T_\pi^*\in K_G(M)_{(g)}$ maps to $\fr{1+T_\pi^*}{1-T_\pi^*}\in K_G(M^g)_{g}$. Now we can compute $\ch_g\circ p_!\left(\fr{1+T_\pi^*}{1-T_\pi^*}\right)$ using the following diagram 
\[\begin{xy}
(-20,0)*+{K_G(M^g)_g}="A";
(20,0)*+{K(M^g)\ot\C}="B";
(60,0)*+{H^*(M^g)\ot\C}="C";
(-20,-20)*+{K_G(B)_g}="D";
(20,-20)*+{K(B)\ot\C}="E";
(60,-20)*+{H^*(B)\ot\C}="F";
{\ar"A";"B"}?*!/_3mm/{\chi_g};
{\ar "B";"C"}?*!/_3mm/{\ch};
{\ar "D";"E"}?*!/_3mm/{\chi_g};
{\ar "E";"F"}?*!/_3mm/{\ch};
{\ar "A";"D"}?*!/_4mm/{p_!^K};
{\ar "B";"E"}?*!/_4mm/{p_!^K};
{\ar "C";"F"}?*!/_4mm/{p_!^H};
\end{xy}\]

If $g=1$, then the right square doesn't commute, but the failure to commute is the \emph{defect formula} $\ch\big[\pi_!\big(\si(D)\big)\big]=\pi_!^H\big[\ch\big(\si(D)\big)\cdot\Td(T_\pi)\big]$, where $\Td$ is the Todd class (this formula is also called the Grothendieck--Riemann--Roch computation). Since $\si(D)=1+T_\pi^*$, we have $\ch\big(\si(D)\big)=1+e^{-x}$, where $x=e(T_\pi M)$. Combining this with $\Td(T_\pi)=\fr{x}{1-e^{-x}}$ (see \cite[pg.\ 555]{asiii}), this recovers (\ref{eqn:indexfamilies}). 

If $g=e^{2\pi i r/m}\neq 1$, then the right square commutes because $p:M^g\ra B$ is a covering map. To express the class $\ch\circ \chi_g\left(\fr{1+T_\pi^*}{1-T_\pi^*}\right)$ in $H(M^g)\ot\C$, note that the character of $\chi_g(T_\pi^*)$ will vary on different components of $M^g$. Decompose $M^g=\sqcup M_j$, so that $e^{2\pi i/m}\in G$ acts on $T_\pi\rest{}{M_j}$ by rotation by $\ta_j=\fr{2\pi j}{m}$. Let $x_j$ denote the restriction of $e(T_\pi M)$ to $M_j$. Then, on $M_j$, we have
\[\ch\circ\chi_g\left(\fr{1+T_\pi^*\rest{}{M_j}}{1-T_\pi^*\rest{}{M_j}}\right)=\fr{1+e^{-i\>r\ta_j}e^{-x_j}}{1-e^{-i\>r\ta_j}e^{-x_j}}=\fr{e^{(x_j+i\>r\ta_j)/2}+e^{-(x_j+i\>r\ta)/2}}{e^{(x_j+i\>r\ta_j)/2}-e^{-(x_j+i\>r\ta_j)/2}}=\coth\left(\fr{x_j+i\>r\ta_j}{2}\right)\]

Combining these terms for all $j$, denoting $\ep_j=p_!(x_j)$, and combining with (\ref{eqn:lhs}) gives the desired index formula
\[\sum_{q^m=1}\big[\ch(E_q)-\ch(\bar E_{\bar q})\big]\cdot q^r=\sum_{\substack{1\le j\le m-1\\M_j\neq\es}}\coth\left(\fr{\ep_j+i\>r\ta_j}{2}\right).\]

{\it Remark.} 
We record here for later use the first two terms of the Taylor series of $\coth\left(\fr{x+i\vp}{2}\right)$ at $x=0$:
\begin{equation}\label{eqn:taylor}\coth\left(\fr{x+i\vp}{2}\right)\approx-i\cot(\vp/2)+\fr{1}{2}\csc^2(\vp/2)\>x.\end{equation}
%\begin{equation}\label{eqn:taylor}\coth\left(\fr{x+i\ta}{2}\right)\approx\coth\left(\fr{i\ta}{2}\right)-\csch^2\left(\fr{i\ta}{2}\right)\>\fr{x}{2}.\end{equation}
Compare with \cite[pg.\ 150]{shanahan}. 

{\it Remarks.} 
\begin{enumerate}
\item The discussion above works generally when $S$ is replaced by a manifold of even dimension; for more details, see \cite{asiii} and \cite{erw}.
\item In the case $B=\pt$ and $G=\{e\}$ (i.e.\ the non-families, non-equivariant version of the index theorem), $\ind(D^+)\in  K(\pt)=\Z$ is equal to $\dim\ca H^+-\dim\ca H^-$, which is zero because the $\pm1$-eigenspaces of $\tau$ acting on $\ca H^1(S;\C)$ are conjugate (as complex vector spaces), so in particular they have the same dimension. However, in the families and/or equivariant case, $\ind(D^+)$ is nontrivial in general. \end{enumerate} 

\section{Computing $\al^*:H^2\big(\Sp^G;\Q\big)\ra H^2\big(\Mod^G;\Q\big)$}\label{sec:compute}

In this section we prove Theorem \ref{thm:main}. We proceed as follows. 
\begin{itemize} 
\item Step 1: We show $H^2(\Sp^G;\Q)\simeq\Q\{x_q: q^m=1,\>\im(q)\ge0\}$ using results of Borel \cite{borel_cohoarith}. For our computation, we require $S/G$ to have genus $h\ge6$. 
\item Step 2: We show that $\al^*(x_q)=c_1(\ca E_q)=c_1(\ca E_{\bar q})$ in $H^2(\Mod^G)$, where $\ca E=\bigoplus_{q^m=1}\ca E_q$ is the universal Hodge bundle over $B\Mod(S)^G$. This involves comparing two complex structures on the Hodge bundle. 

%For an $(S,G)$-bundle $M\ra B$, the Hodge bundle $E\ra B$ (classified by a map $\pi_1(B)\ra\Sp^G\ra\Sp^G(\R)$) decomposes into eigenbundles $E=\bigoplus_{q^m=1}E_q$. We show that $c_1(E_q)$ and $c_1(E_{\bar q})$ pull back to $x_q\in H^2(\Sp^G)$ for $\im(q)\ge0$. This requires some careful book-keeping of which complex structure is used to defined $c_1(E_q)$. 
\item Step 3: Let $\si,\eta_{j_1},\ld,\eta_{j_n}\in H^2(\Mod^G)$ be the signature class and classes defined in Theorem \ref{thm:main}. The index formula gives a system of linear equations relating these classes to the classes $\al^*(x_q)$. Upon investigating this linear system, the result will follow from some character theory and a result about circulant matrices. 
\end{itemize} 

\subsection{The arithmetic group $\Sp^G$.}\label{sec:arithmetic}  In this section we compute $H^2(\Sp^G;\Q)$. This involves working out some of the general theory of arithmetic groups in the special case $\Sp^G$. Specifically, we (i) use restriction of scalars to show $\Sp^G$ is a lattice in a group $\bb G=\Sp_{2h}(\R)\ti\Sp_{2h'}(\R)\ti\prod_q\SU(a_q,b_q)$, (ii) use Borel--Matsushima to relate $H^j(\Sp^G;\Q)$ to the cohomology of a product of Grassmannians in some range $0\le j\le N$, and (iii) determine the range $N$ by giving a lower bound the $\Q$-rank of the irreducible factors of $\Sp^G$. To those familiar with arithmetic groups and their cohomology, (i) and (ii) are routine exercises. Our proof of (iii) uses the topology of the branched cover $S\ra S/G$ to find isotropic subspaces in sub-representations of $H_1(S;\Q)$. 

{\bf Restriction of scalars.} The group $\Sp^G$ acts by $G$-module maps on $H=H_1(S;\Q)$, so it preserves the decomposition $H=\bop_{k\mid m} H_k$ into isotypic components for the irreducible representations of $G$ over $\Q$. (Recall that the simple $\Q G$-modules are isomorphic to $\Q(\ze_k)$, where $\ze_k=e^{2\pi i/k}$ and $k\mid m$.) This induces a decomposition $\Sp^G=\prod_{k\mid m}\Ga_k$ into irreducible lattices. We want to identify $\Ga_k$ and determine the real semisimple Lie group $\bb G_k$ that contains $\Ga_k$ as a lattice. 

Fix $k\mid m$. For simplicity, denote $\ze=\ze_k$ and $\Ga=\Ga_k$. The representation $V=H_k$ is naturally a vector space over $\Q(\ze)$, and the intersection form $\om$ on $H$  determines a form $\be:V\ti V\ra\Q(\ze)$ given by
\begin{equation}\label{eqn:hermitian}\be(u,v)=-i\>\sum_{j=1}^k \om(u, t^jv)\cdot\ze^j.\end{equation}
Compare \cite[\S 3.1]{gllm}. If $k=1,2$, then $\be$ is symplectic, the group $\bb G=\Sp(V)$ preserving $\be$ is an algebraic group defined over $\Q$, and $\Ga\doteq\bb G(\Z)$ (commensurable). For $k\ge3$, $\be$ is Hermitian with respect to the involution $\tau(\ze)=\ze^{-1}$ on $\Q(\ze)$, the group $\bb G=\U(V,\be)$ of $\Q(\ze)$-linear automorphisms preserving $\be$ is an algebraic group $\bb G$ defined over $F=\Q(\ze+\ze^{-1})$ (the maximal real subfield of $\Q(\ze)$), and $\Ga\doteq\bb G(\ca O)$, where $\ca O\sbs F$ is the ring of integers. For a similar discussion, see \cite{looijenga_prym}. 

Restriction of scalars applied to $\bb G=\U(V,\be)$ gives an algebraic group $\bb G'$ defined over $\Q$ such that $\bb G'(\Z)$ is commensurable with $\bb G(\ca O)$. 
To define $\bb G'$, define an embedding $\si_q:F\ra\R$ by $\ze+\ze^{-1}\mapsto q+q^{-1}$ for each primitive $k$-th root of unity $q$ with $\im(q)>0$, and denote $\bb G^\si=\U(V,\si_q\circ\be)$. By the restriction of scalars construction, $\bb G'=\prod \bb G^{\si}$ is an algebraic group over $\Q$, the $\Z$-points $\bb G_\Z'$ is a lattice in $\bb G'$, and $\bb G_{\ca O}$ is commensurable with $\bb G_\Z'$. According to Witte-Morris \cite[Prop.\ 18.5.7]{morris}, the real points of $\bb G^{\si}$ is $\SU(a,b)$ for some $a,b\ge0$. 

Varying over all $k$, we find that  
\begin{equation}\label{eqn:rational}\Sp^G=\Sp(H_1)_\Z\ti \Sp(H_{2})_\Z\ti \prod_{\substack{k\mid m\\2<k\le\fr{m}{2}}}\U(H_k,\be_k)_{\ca O_k}\end{equation}
is a lattice in 
\begin{equation}\label{eqn:real}\Sp^G(\R)=\Sp_{2h}(\R)\ti\Sp_{2h'}(\R)\ti\prod_{\substack{q^m=1\\\im(q)>0}}\SU(a_q,b_q).\end{equation}
The second factor on the right-hand side of (\ref{eqn:rational}) and (\ref{eqn:real}) appears only when $m$ is even. 

{\it Remark.} In Section \ref{sec:app} we describe how to determine the integers $a_q,b_q$ using the Chevalley--Weil formula and the degree-0 term in the index formula (\ref{eqn:index1}). 

\vspace{.2in}
{\bf Borel--Matsushima.} In this section we recall the Borel--Matsushima description of $H^j(\Ga;\Q)$ when $\Ga=\Ga_k$ is an irreducible factor of $\Sp^G$ as in (\ref{eqn:rational}). In what follows we will only use the case $j=2$. For $\Ga\simeq\Sp_{2n}(\Z)$, it is well-known that $H^2(\Sp_{2n}(\Z);\Q)\simeq\Q$ when $n\ge3$ (see \cite[Theorem 3.4]{erw} or \cite[Theorem 5.3]{putman}). Thus we focus on the Hermitian case $k>2$. 

\begin{prop}\label{prop:latticecoho}
Let $\bb G$ be an algebraic group defined over a field $F$ whose associated real semisimple Lie group $\bb G(\R)$ is a product of unitary groups $\SU(a_q,b_q)$ for $q$ in some set $Q$. For a lattice $\Ga\doteq \bb G(\ca O_F)$, the map 
\begin{equation}\label{eqn:latticecoho}\vp:B\Ga\ra \prod B\SU(a_q,b_q)\sim\prod B\text{S}\big(\U(a_q)\ti \U(b_q)\big)\ra \prod B\U(a_q)\end{equation}
induces an isomorphism on $H^j(-;\Q)$ for $0\le j\le \big\lfloor(\rk_F(\Ga)-1)/2\rfloor$, where $\rk_F(\Ga)\equiv\rk_F(\bb G)$ is the $F$-rank.
%$0\le j\le C$, where $C=\min\left(2b_q, \big[(\rk_F(\bb G)+1)/2\big]\right)$.  
\end{prop}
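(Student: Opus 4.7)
\emph{Proof proposal.} The plan is to combine Borel's stability theorem for the cohomology of arithmetic groups \cite{borel_cohoarith} with the classical presentation of the rational cohomology of a complex Grassmannian.

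After the restriction of scalars discussed in the preceding subsection, $\Ga$ is commensurable with the $\Z$-points of a semisimple $\Q$-algebraic group $\bb G'$ with real points $\bb G'(\R)=\prod_q \SU(a_q,b_q)$, and the $\Q$-rank of $\bb G'$ equals $\rk_F(\bb G)$. Write $K=\prod_q S(\U(a_q)\ti \U(b_q))$ for a maximal compact of $\bb G'(\R)$, and let $X_u=\prod_q \Gr_{a_q}(\C^{a_q+b_q})$ denote the compact dual of the symmetric space $\bb G'(\R)/K$. Borel's theorem \cite{borel_cohoarith} fits into a commutative diagram (in which $B\Ga\to BK$ is induced by $\Ga\hra\bb G'(\R)\sim K$, and $X_u\to BK$ classifies the tautological frames) and asserts that the resulting composition
\[
H^j(BK;\Q)\ra H^j(X_u;\Q)\ra H^j(\Ga;\Q)
\]
makes the second arrow an isomorphism for $0\le j\le c(\bb G')$. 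Specializing Borel's table of stability constants to groups of $SU(a,b)$-type over a totally real field gives $c(\bb G')\ge\lfloor(\rk_F(\bb G)-1)/2\rfloor$.

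Next I compute on the compact dual. Each Grassmannian factor has the classical presentation
\[
H^*\big(\Gr_{a}(\C^{a+b});\Q\big)\cong\Q[c_1,\ldots,c_a]/I_{a,b},
\]
where $I_{a,b}$ is generated by the degree-$i$ components of $c(E)^{-1}$ for $i>b$ (with $E$ the tautological subbundle), so the first relation lives in degree $2(b+1)$. Factor the map $\vp$ of the proposition as
\[
H^*\big(\textstyle\prod_q B\U(a_q);\Q\big)\xra{\beta} H^*\big(\textstyle\prod_q BS(\U(a_q)\ti \U(b_q));\Q\big)\xra{\alpha} H^*(\Ga;\Q),
\]
with $\beta$ the pullback along the projections $BS(\U(a_q)\ti \U(b_q))\to B\U(a_q)$. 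Through Borel's range, $\alpha$ factors through $H^*(X_u;\Q)$, which, by the presentation above, kills every $c_i'$ modulo polynomial expressions in the $c_j$. Hence $\alpha\circ\beta$ is the identity on $\Q[c_1,\ldots,c_a]$ in cohomological degree $\le 2\min_q b_q$. Because $\min_q b_q\ge\rk_F(\bb G)$ (the Witt index over $\R$ dominates the one over $F$), this range contains Borel's range, and $\vp^*$ is an isomorphism for $0\le j\le\lfloor(\rk_F(\bb G)-1)/2\rfloor$.

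The main obstacle is pinning down the precise Borel stability constant $\lfloor(\rk_F(\bb G)-1)/2\rfloor$ for unitary-type groups over a totally real field, i.e.\ verifying that the form of Borel's inductive estimates in \cite{borel_cohoarith} really does give this bound after restriction of scalars. Once the constant is in hand the rest is routine bundle theory on the compact dual.
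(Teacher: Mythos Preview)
Your approach is essentially the same as the paper's: both reduce to Borel's theorem identifying $H^*(\Ga;\Q)$ with $H^*(X_u;\Q)$ in a range, then use that $\Gr_{a_q}(\C^{a_q+b_q})\to B\U(a_q)$ is a rational cohomology isomorphism through degree $2\min_q b_q$, together with $\min_q b_q\ge\rk_F(\bb G)$. The one point you flagged as the ``main obstacle'' is exactly what the paper makes explicit: the Borel range is $\min\{c(\bb G),m(\bb G(\R))\}$, and the paper cites $c(\bb G)\ge\lfloor(\rk_F(\bb G)-1)/2\rfloor$ from \cite[\S9(3)]{borel_cohoarith} and $m(\bb G(\R))\ge\rk_\R(\bb G(\R))/2$ from Matsushima, then uses $\rk_F\le\rk_\R$ to see both constants dominate the claimed bound.
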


%\begin{prop}\label{prop:latticecoho}Fix a set $Q$ and integers $a_q,b_q\ge0$ for $q\in Q$. Let $\Ga<\prod_q\SU(a_q,b_q)$ be an irreducible lattice. Then the map  
%\begin{equation}\label{eqn:latticecoho}\vp:B\Ga\ra \prod B\SU(a_q,b_q)\sim\prod B\text{S}\big(\U(a_q)\ti \U(b_q)\big)\ra \prod B\U(a_q)\end{equation}
%induces an isomorphism on $H^j(-;\Q)$ for $0\le j\le \min\big(2b,\rk_\Q(\Ga)/2\big)$, where $b=\min b_q$. 
%\end{prop}
%
Focusing on degree 2, since $H^2\big(B\U(p);\Q\big)=\Q\{c_1\}$ for $p\ge1$, combining with the computation for $H^2(\Sp_{2h}(\Z);\Q)$ mentioned above, we have
\begin{cor}\label{cor:H2} Let $\Sp^G<\Sp^G(\R)$ be as in (\ref{eqn:rational}) and (\ref{eqn:real}). Assume $h,h'\ge3$ and $a_q,b_q\ge1$. If $2\le \min_{2<k\le m/2} \big\lfloor(\rk_{F_k}(\Ga_k)-1)/2\rfloor$, then
\begin{equation}\label{eqn:coho}H^2(\Sp^G;\Q)=\Q\{x_q: q^m=1,\>\im(q)\ge0\},\end{equation}
where $x_1$ and $x_{-1}$ are pulled back from $\Sp^G\ra\Sp_{2h}(\R)$ and $\Sp^G\ra\Sp_{2h'}(\R)$, respectively, and $x_q$ is pulled back from $\Sp^G\ra\SU(a_q,b_q)\sim\text{S}\big(\U(a_q)\ti\U(b_q)\big)\ra\U(a_q)$. \end{cor}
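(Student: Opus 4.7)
The strategy is to combine the product decomposition (\ref{eqn:rational}) with the rational Künneth formula, then compute $H^2$ of each arithmetic factor separately using Borel stability or Proposition \ref{prop:latticecoho}, and finally identify the generators with the classes $x_q$.

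First, write $\Sp^G = \Ga_1 \ti \Ga_2 \ti \prod_{2 < k \le m/2} \Ga_k$, where $\Ga_1 \doteq \Sp_{2h}(\Z)$, $\Ga_2 \doteq \Sp_{2h'}(\Z)$ (present only when $m$ is even), and $\Ga_k \doteq \U(H_k,\be_k)_{\ca O_k}$ for $k \ge 3$. Each factor satisfies $H^1(\Ga_k;\Q)=0$: the symplectic cases follow from Borel stability since $h,h'\ge 3$ (cf.\ \cite{erw, putman}), while the Hermitian cases follow from Proposition \ref{prop:latticecoho} applied in degree $1$, which requires only $\rk_{F_k}(\Ga_k)\ge 3$ and is implied by the stated rank hypothesis. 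Together with finite generation of rational cohomology in low degrees, the Künneth formula yields
\[H^2(\Sp^G;\Q) \simeq \bigoplus_k H^2(\Ga_k;\Q).\]

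Next, compute each summand and identify its generator. For $\Ga_1$, Borel stability gives $H^2(\Sp_{2h}(\Z);\Q)\simeq\Q$, generated by the class pulled back from $B\Sp_{2h}(\R)\sim B\U(h)$; under the projection $\Sp^G\ra\Sp_{2h}(\R)$ this is $x_1$. When $m$ is even, the same reasoning applied to $\Ga_2$ produces $x_{-1}$. For $\Ga_k$ with $k\ge 3$, the hypothesis $\lfloor(\rk_{F_k}(\Ga_k)-1)/2\rfloor \ge 2$ allows us to apply Proposition \ref{prop:latticecoho} at $j=2$, giving
\[H^2(\Ga_k;\Q) \simeq H^2\Bigl(\prod_q B\U(a_q);\Q\Bigr) \simeq \bigoplus_q \Q\{c_1\},\]
where $q$ ranges over the primitive $k$-th roots of unity with $\im(q)>0$. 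Each $c_1$ in this sum is precisely the class $x_q$ pulled back along $\Sp^G \ra \Ga_k \ra \SU(a_q,b_q) \ra \U(a_q)$.

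Assembling these contributions produces exactly one generator $x_q$ for each $q$ with $q^m = 1$ and $\im(q)\ge 0$, which is the asserted description of $H^2(\Sp^G;\Q)$. The only point requiring care is the $H^1$-vanishing across all factors needed to justify the Künneth splitting; once that is in place, the remainder is bookkeeping that lines up the factors of (\ref{eqn:rational}) with the classes $x_q$ named in the statement.
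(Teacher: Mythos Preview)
Your proof is correct and follows the same approach the paper takes: the paper's argument is the single sentence preceding the corollary, which says to combine Proposition~\ref{prop:latticecoho} in degree~2 with the known computation of $H^2(\Sp_{2h}(\Z);\Q)$. You have simply made explicit the K\"unneth step and the $H^1$-vanishing needed to justify it, which the paper leaves implicit.
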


\begin{proof}[Proof of Proposition \ref{prop:latticecoho}]
The map on cohomology induced by (\ref{eqn:latticecoho}) can be realized more geometrically as follows (compare \cite[Proposition 7.5]{borel_cohoarith} and \cite[\S 3.2]{giansiracusa}). 
The cohomology $H^*(\Ga;\Q)$ can be identified with the cohomology of the complex $\Om^*(X)^\Ga$ of $\Ga$-invariant differential forms on the symmetric space $X=\bb G(\R)/K$, where $K<\bb G(\R)$ is a maximal compact subgroup. A first approximation to the cohomology of $\Om^*(X)^\Ga$ is the cohomology of the subalgebra $\Om^*(X)^{\bb G(\R)}$ of $G(\bb R)$-invariant forms, which can be identified with the cohomology $H^*(X_U;\Q)$ of the compact dual symmetric space 
\[X_U=\prod\SU(a_q+b_q)/\text{S}\big(\U(a_q)\ti\U(b_q)\big)\simeq\prod\Gr_{a_q}(\C^{a_q+b_q}).\] According to Borel \cite[Theorem\ 4.4(ii)]{borel_cohoarith2}, the inclusion $\Om^*(X)^{\bb G(\R)}\ra\Om^*(X)^\Ga$ induces an isomorphism $H^j(X_U;\R)\ra H^j(\Ga\bs X;\R)$ for $0\le j\le\min \big\{c(\bb G),\> m(\bb G(\R))\big\}$. In our case $c(\bb G)\ge \big\lfloor(\rk_F(\bb G)-1)/2\big\rfloor$ by \cite[\S9(3)]{borel_cohoarith}, and $m(\bb G(\R))\ge\rk_\R(\bb G(\bb R))/2$ by \cite[Theorem 2]{matsushima} (see also \cite[\S9.4]{borel_cohoarith}). Since $F$-rank is always less than or equal to $\R$-rank, we get an isomorphism for $0\le j\le \big\lfloor(\rk_F(\bb G)-1)/2\big\rfloor$. 

Furthermore, the obvious map $\Gr_a(\C^{a+b})\ra\Gr_a(\C^\infty)\simeq B\U(a)$ induces a map $X_U\ra\prod B\U(a_q)$ that is a cohomology isomorphism in degrees $0\le j\le 2\min b_q$. See \cite[Example 4.53]{hatcher}. Note that no $b_q$ can be smaller than $\min_k\{\rk_{F_k}(\Ga_k)\}$ because the $F$-rank for a unitary group is equal to the maximal dimension of an isotropic subspace \cite[Ch.\ 9]{morris}.

In summary the map $H^*\big(\prod B\U(a_q)\big)\ra H^*(X_U)\ra H^*(\Ga)$ induces an isomorphism in degrees $0\le j\le  \big\lfloor(\rk_F(\bb G)-1)/2\big\rfloor$, as desired. 
\end{proof}

{\bf $F$-rank and covers.} To apply Proposition \ref{prop:latticecoho}, we need to compute the $F$-rank of our lattice $\Ga_k<\U(H_k, \be_k)$, or at least bound it from below. 

\begin{prop}\label{prop:qrank}
Let $S$ be a surface with a $G=\Z/m\Z$ action, and let $h$ be the genus of $S/G$. Take $\Ga_k<\U(H_k,\be_k)$ as above (for any $k\mid m$, $k\ge3$). Then $\rk_{F_k}(\Ga_k)\ge h-1$.  
\end{prop}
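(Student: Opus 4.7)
The plan is to identify the $F_k$-rank of $\Ga_k$ with the maximum $\Q(\ze_k)$-dimension of a $\be_k$-isotropic subspace of $H_k$, and then produce such a subspace of dimension $\ge h-1$ by lifting a Lagrangian of curves from $\bar S$ to the cover $S$.

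The first step is a reduction. For the unitary group $\U(H_k,\be_k)$ over $F_k$, the $F_k$-rank equals the maximum $\Q(\ze_k)$-dimension of a $\be_k$-isotropic $\Q(\ze_k)$-subspace of $H_k$. Since $\Q(\ze_k)$-subspaces of $H_k$ are exactly the $\Q G$-invariant $\Q$-subspaces, the defining formula (\ref{eqn:hermitian}) implies that for such a subspace $W$, $\be_k$-isotropy is equivalent to isotropy under the intersection form $\om$ on $H_1(S;\Q)$. Distinct isotypic components of $H_1(S;\Q)$ are mutually $\om$-orthogonal, since they are eigenspaces of the $G$-action with distinct characters and $\om$ is $G$-invariant. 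Hence it suffices to construct a $\Q G$-invariant, $\om$-isotropic subspace $W\sbs H_1(S;\Q)$ with $\dim_{\Q(\ze_k)}(W\cap H_k)\ge h-1$.

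For the construction, enclose $\bar Z$ inside an open disk $D\sbs\bar S$, so that $\bar\Si:=\bar S\sm D$ is a compact genus-$h$ surface with one boundary component disjoint from $\bar Z$. Pick pairwise disjoint, non-separating simple closed curves $a_1,\ld,a_h\sbs\text{int}(\bar\Si)$ whose homology classes span a Lagrangian of $H_1(\bar S;\Q)$. Because $G$ is torsion, the monodromy $\pi_1(\bar S\sm\bar Z)\ra G$ induces the zero map on rational homology, so by band-summing each $a_i$ with an appropriate integer combination of small loops around branch points, we can arrange that every $a_i$ has trivial $G$-monodromy, without disturbing either the disjointness of the $a_i$ or their $\Q$-classes in $H_1(\bar S;\Q)$. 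Let $W\sbs H_1(S;\Q)$ be the $\Q G$-submodule generated by the classes of the connected components of $\mu^{-1}(a_1)\cup\cdots\cup\mu^{-1}(a_h)$; since the generators are supported on pairwise disjoint simple closed curves, $W$ is $\om$-isotropic.

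The remaining step is the dimension estimate, and it is the main obstacle. Under the trivial-monodromy assumption, $\mu^{-1}(a_i)$ has $m$ components freely permuted by $G$, so the cyclic module $\Q G\cdot[\tilde a_i]$ generated by a single component is isomorphic to the regular representation $\Q G\cong\bigoplus_{k\mid m}\Q(\ze_k)$, contributing a one-dimensional $\Q(\ze_k)$-summand to $W\cap H_k$ for every $k$. The task is then to show that at least $h-1$ of these $h$ lines are $\Q(\ze_k)$-linearly independent in $H_k$. This is carried out by computing character-weighted sums of the components of $\mu^{-1}(a_i)$ and applying the pushforward $\mu_*\colon H_1(S;\Q)\ra H_1(\bar S;\Q)$, which sends the total classes $[\mu^{-1}(a_i)]$ to the independent multiples $m[a_i]\in H_1(\bar S;\Q)$; the possible loss of one dimension, giving $h-1$ rather than $h$, reflects the monodromy constraint at $\pa\bar\Si$ imposed by encircling all of $\bar Z$.
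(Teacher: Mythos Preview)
Your reduction is correct and matches the paper. The gap is in the last paragraph: the independence argument via $\mu_*$ cannot work for $k\ge 3$. The pushforward sends every $G$-translate $t^j[\tilde a_i]$ to the same class $[a_i]\in H_1(\bar S;\Q)$, so on the character-weighted sum $\sum_{j}\ze^{-j}\,t^j[\tilde a_i]$ (which is, up to scalar, the projection of $[\tilde a_i]$ to the $\ze$-eigenspace inside $H_k$) it returns $\big(\sum_j\ze^{-j}\big)[a_i]=0$. In other words $\mu_*$ annihilates $H_k$ for every $k\ge 2$, so it detects independence only in the invariant summand $H_1$, not in $H_k$. As written, you have produced $h$ isotropic lines in $H_k$ but given no reason any of them are nonzero, let alone independent; the remark about ``loss of one dimension'' does not address this.

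What is missing is something to pair the projected classes against. The paper supplies this by lifting an entire subsurface rather than just a Lagrangian: it takes $N\sbs\bar S\sm\bar Z$ to be a genus-$(h-1)$ subsurface disjoint from the Poincar\'e-dual cycle of the monodromy homomorphism, so that every curve in $N$ lifts. Then $\mu^{-1}(N)$ is $m$ disjoint copies of $N$ freely permuted by $G$, and a single sheet carries a full symplectic basis $\tilde a_1,\tilde b_1,\ld,\tilde a_{h-1},\tilde b_{h-1}$. Since translates to distinct sheets are disjoint, $\om(t^j\tilde a_i,\,t^{j'}\tilde b_{i'})=\de_{jj'}\de_{ii'}$, so these $2m(h-1)$ classes have nondegenerate intersection matrix and are therefore linearly independent in $H_1(S;\Q)$. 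Hence the $\Q G$-module they span is free of rank $2(h-1)$, its $H_k$-component is $\Q(\ze_k)^{2(h-1)}$, and the $\tilde a_i$-projections give a $\be_k$-isotropic $\Q(\ze_k)$-subspace of dimension $h-1$. Your band-sum construction could be salvaged by also trivializing the monodromy of dual curves $b_i$ and using them to certify independence via $\om$, but at that point you are reproducing the paper's argument; the Poincar\'e-dual description of $N$ is cleaner than iterated band-sums and sidesteps the disjointness bookkeeping you left unjustified in your second paragraph.
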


\begin{proof}
By \cite[Ch.\ 9]{morris}, the $F_k$-rank of $\U(H_k,\be_k)$ is the maximal dimension of an $\be_k$-isotropic subspace of $H_k$ (as a vector space over $F_k$). By the definition of $\be_k$, to prove the proposition, it suffices to exhibit an $(h-1)$-dimensional $\om$-isotropic subspace of $H_1(S;\Q)$ (as a vector space over $\Q$). 

Denote $\bar S=S/G$ and let $\mu: S\ra\bar S$ be the quotient map. After removing the fixed points $Z=\fix(G)\sbs S$ and $\mu(Z)$, the map $\pi$ induces  a covering map $\Si\ra\bar\Si$ between surfaces with boundary. (Recall that we are assuming that $G$ acts trivially on $Z$.) Associated to this cover is a homomorphism $\rho: \pi_1(\bar\Si)\ra H_1(\bar\Si;\Z)\ra\Z/m\Z$. By Poincar\'e duality, there exists $[C]\in H_1(\bar\Si,\pa\bar\Si;\Z)$ so that $\rho(\ga)=[\ga]\cdot[C]\mod m$. Associated to the pair $(\bar\Si,\pa\bar\Si)$, we have an exact sequence
\[H_1(\bar\Si)\ra H_1(\bar\Si,\pa\bar\Si)\ra H_0(\pa\bar\Si),\]
so we can write $[C]=[A]+[B]$, where $A$ is a simple closed curve on $\bar\Si$ and $B$ is an arc connecting two boundary components of $\bar\Si$. Up to a change of coordinates, the pictures is as in the figure below. 
\begin{figure}[h!]
\labellist 
\small\hair 2pt 
\pinlabel $A$ at 105 440
\pinlabel $B$ at 220 500
\pinlabel $\bar\Si$ at 330 410
\endlabellist 
  \centering
\includegraphics[scale=.4]{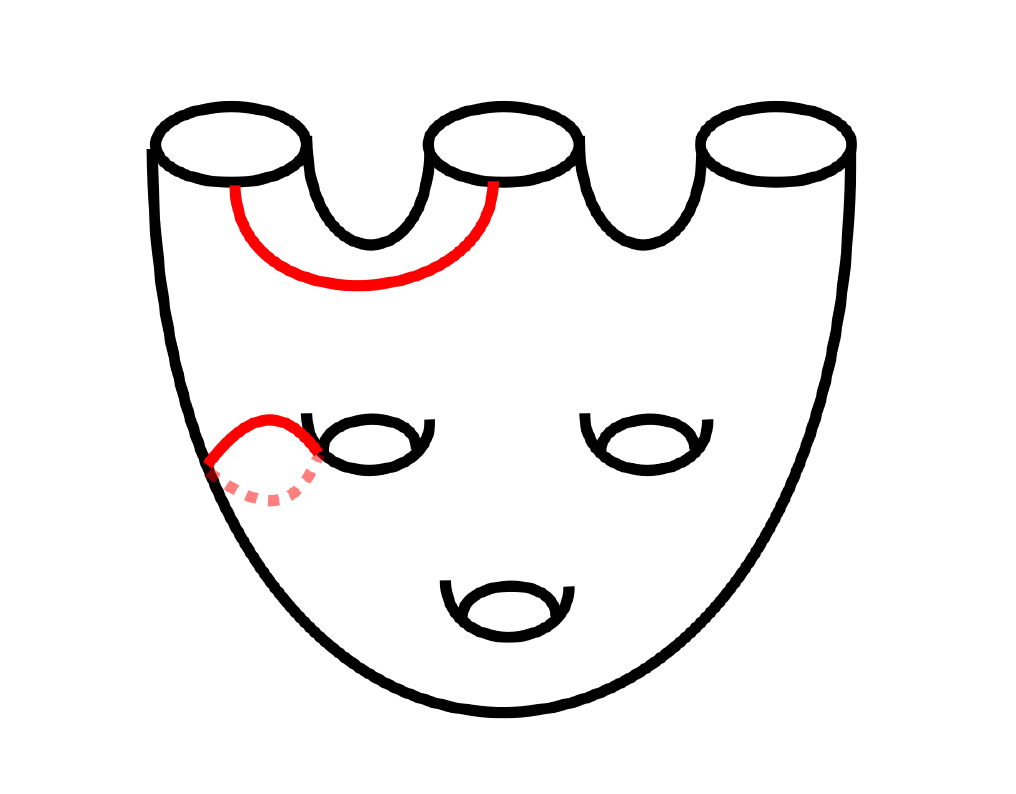}
%      \caption{Caption.}
%      \label{figure:surface}
\end{figure}

A simple closed curve $\ga\sbs \bar\Si$ lifts to $\Si$ if and only if $[\ga]\cdot [C]\equiv 0\mod m$. Thus we can find a genus-$(h-1)$ subsurface $N\sbs\bar\Si$ that lifts to $\Si\sbs S$, and gives an $(h-1)$-dimensional isotropic subspace of $H_1(S;\Q)$. 
\end{proof}

Since $\lfloor(\rk_{F_k}(\Ga_k)-1)/2\rfloor\ge\lfloor (h-2)/2\rfloor\ge2$ for $h\ge6$, this is the bound that appears in Theorem \ref{thm:main}. 

%{\it Remark.} The bound $\rk_\Q(\Ga)/4$ appearing in Proposition \ref{prop:latticecoho} is not always optimal. To get a better bound, one should compute Borel's constant $c(\bb G)$ for $\bb G=\U(H,\be)$ (see \cite[Theorem 7.5]{borel_cohoarith} and \cite[Theorem 4.4(ii)]{borel_cohoarith2}). This involves a computation with the root system. Since our computation only gives information about stable classes, we choose to settle with the bound given above. 

\subsection{Relating $H^*(\Sp^G;\Q)$ with Chern classes of the Hodge bundle.} 
%Let $M\ra B$ be an $(S,G)$ bundle, and denote the monodromy $\phi:\pi_1(B)\ra\Mod^G$. 
In order to study the image of $\al^*:H^2(\Sp^G)\ra H^2(\Mod^G)$, we want to relate the classes $\al^*(x_q)\in H^2(\Mod^G)$ to the Chern classes $c_1(\ca E_q)\in H^2(\Mod^G)$, where $\ca E_q$ is the Hodge eigenbundle for the universal $(S,G)$ bundle. We will see that 
\begin{equation}\label{eqn:pullchern}c_1(\ca E_q)=\al^*(x_q)=c_1(\ca E_{\bar q})\end{equation}
for $q^m=1$ with $\im(q)>0$. This relation can be obtained by comparing two maps from $B\Mod^G$ to the product of unitary groups. For the first map, consider the composition
\begin{equation}\label{eqn:map1}B\Mod^G\ra B\Sp^G\ra B\Sp_{2g}(\R)\xra\sim B\U(g),\end{equation}
and note that it factors through $B\U(g)^G\ra B\U(g)$. The group $\U(g)^G$ is a product of unitary groups, one for each $q^m=1$. The map $B\Mod^G\ra B\Sp^G\ra B\U(g)^G$ classifies the Hodge eigenbundles (for the universal bundle).

The second map 
\begin{equation}\label{eqn:map2}B\Mod^G\ra B\Sp^G\ra B\Sp^G(\R)\sim B\U(h)\ti B\U(h')\ti\prod_{\substack{q^m=1\\\im(q)>0}}BS\big(\U(a_q)\ti\U(b_q)\big)\end{equation}
is obtained using (\ref{eqn:real}). 
On the bundle level this map is obtained by starting with an $(S,G)$ bundle $M\ra B$, taking the associated real vector bundle $H_1(S;\R)\ra E\ra B$, decomposing $E$ according to the decomposition of $H_1(S;\R)$ as a $G$-representation over $\R$, and giving this bundle a complex structure induced from the action of $G$ (the Hermitian forms (\ref{eqn:hermitian}) on sub-representations of $H_1(S;\Q)$ give $H_1(S;\R)$ a natural complex structure). 

The maps (\ref{eqn:map1}) and (\ref{eqn:map2}) classify the same bundle, but with respect to different complex structures. From this it follows that the terms in (\ref{eqn:pullchern}) differ by at most $-1$. The following proposition settles the difference. Although it suffices to work with the universal $(S,G)$ bundle, we find it more convenient to work on the level of individual bundles.

%The Hodge bundle $E=\bop_q E_q\ra B$ of an $(S,G)$ bundle $M\ra B$ is induced by a map $B\ra B\U(a_1)\ti B\U(a_{-1})\ti\prod_{\substack{q^m=1\\\im(q)>0}} B\U(a_q)\ti B\U(b_q)$. For the universal example, this map is induced by $\Mod(S)^G\ra\Sp^G\ra\prod \U(d_q)$. 

%Let $M\ra B$ be an $(S,G)$-bundle. The induced vector bundle $E\ra B$ with fibers $H_1(S_b;\R)$ admits a fiberwise $G$ action and admits an eigenbundle decomposition 
%\begin{equation}\label{eqn:isotypic}E=E(1)\op E(-1)\op\bigoplus_{\substack{q^m=1\\\im(q)>0}}E(q,\bar q).\end{equation}
%Fiberwise this decomposition corresponds to the isotypic components of $H_1(S_b;\R)$ as a $G$-representation over $\R$. (Recall that the simple $\R G$ modules are the trivial representation $V(1)$, the sign representation $V(-1)$ (if $m$ is even), and $V(q,\bar q)=\R[t]/\big(t^2-(q+\bar q)t+1\big)$ for $q^m=1$ such that $\im(q)>0$.) Of course the decomposition (\ref{eqn:isotypic}) is the one induced by the map $B\ra B\Mod^G\ra B\Sp^G(\R)$ and the decomposition (\ref{eqn:real}). 

%On $H_1(S;\R)$, the Hodge star $\star$ satisfies $\star^2=-1$, so it induces a complex structure $J$ on $H_1(S;\R)$, giving decompositions $H_1(S;\R)=\bigoplus_{q^m=1}H_q$ and $E=\bigoplus_{q}E_q$ into complex representations/bundles. We want to relate these Chern classes $c_1\big(E_q\big)$ to the pullback of the classes $x_q\in H^2(\Sp^G)$ because these are the Chern classes that appear in the index formula. 

\begin{prop}\label{prop:chern}
Let $M\ra B$ be an $(S,G)$-bundle with Hodge bundle $E=\bigoplus_{q^m=1}E_q\ra B$. Then $c_1\big(E_q\big)=c_1\big(E_{\bar q}\big)=x_q$ in $H^2(B)$, where $x_q$ is defined in Corollary \ref{cor:H2}. 
\end{prop}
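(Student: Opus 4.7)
The plan is to compare the two complex structures on the real vector bundle underlying $H_1(S;\R)\to B$ that are implicit in the maps (\ref{eqn:map1}) and (\ref{eqn:map2}), identify the Hodge eigenbundle $E_q$ with a direct summand of the $\SU(a_q,b_q)$-bundle classified by (\ref{eqn:map2}), and then exploit the special-unitary condition to conclude.

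Fix a primitive $k$-th root of unity $q$ with $\im(q)>0$, and let $V_{(q,\bar q)}\to B$ denote the real sub-bundle of $H_1(S;\R)\to B$ corresponding to the $(q,\bar q)$-isotypic component for the $G$-action. The restriction-of-scalars construction in Section \ref{sec:arithmetic} equips $V_{(q,\bar q)}$ with a complex structure $J_q$ in which the action of $\zeta_k\in\Q(\zeta_k)$ becomes multiplication by $q$; under this complex structure $V_{(q,\bar q)}$ is the complex vector bundle of rank $a_q+b_q$ classified by the $\SU(a_q,b_q)$-factor in (\ref{eqn:map2}). After complexifying, the decomposition into $G$-eigenspaces and Hodge types gives an identification
\[
\bigl(V_{(q,\bar q)},J_q\bigr)\;\cong\;E_q\oplus\bar E_{\bar q},
\]
where $E_q$ is the $q$-eigenspace of $G$ in $H^{1,0}$ and $\bar E_{\bar q}$ is the $q$-eigenspace of $G$ in $H^{0,1}$ (which, by the convention $\bar E_q=\overline{E_q}$, is the conjugate bundle to $E_{\bar q}$). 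The Hermitian form $\sigma_q\circ\be_k$ from (\ref{eqn:hermitian}) agrees, up to a universal sign, with the standard Hermitian form built from the intersection pairing $\om$ and the complex structure $J_q$, and the classical Hodge-theoretic fact that $i\,\om(\cdot,\bar\cdot)$ is positive-definite on holomorphic forms shows that $\sigma_q\circ\be_k$ is positive-definite on $E_q$ and negative-definite on $\bar E_{\bar q}$. Consequently $\rk E_q=a_q$, the reduction from $\SU(a_q,b_q)$ to its maximal compact $\text{S}\bigl(\U(a_q)\ti\U(b_q)\bigr)$ is induced by the Hodge splitting, and tracing through the definition in Corollary \ref{cor:H2} yields $x_q=c_1(E_q)$.

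It remains to establish $c_1(E_q)=c_1(E_{\bar q})$. Because the structure group of $E_q\oplus\bar E_{\bar q}$ reduces to $\text{S}\bigl(\U(a_q)\ti\U(b_q)\bigr)$, the total first Chern class of this sum vanishes, so $c_1(E_q)+c_1(\bar E_{\bar q})=0$. Since $\bar E_{\bar q}$ is the conjugate bundle to $E_{\bar q}$, one has $c_1(\bar E_{\bar q})=-c_1(E_{\bar q})$, and combining these two identities gives $c_1(E_q)=c_1(E_{\bar q})$, as required. The symplectic cases $q=\pm 1$ are handled by the same argument with $\Sp_{2n}(\R)$ in place of $\SU(a_q,b_q)$: here $E_q=E_{\bar q}$ automatically, and $x_q=c_1(E_q)$ reduces to the standard identification of $H^2\bigl(B\Sp_{2n}(\R);\Q\bigr)\cong H^2\bigl(B\U(n);\Q\bigr)$ with $c_1$ of the Hodge eigenbundle.

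The main technical obstacle is verifying the sign in the signature claim for $\sigma_q\circ\be_k$, i.e.\ that $E_q$ sits in the positive-definite part and $\bar E_{\bar q}$ in the negative-definite part rather than the other way around; otherwise $x_q$ would equal $c_1(\bar E_{\bar q})=-c_1(E_{\bar q})$. This reduces to a direct, if slightly fiddly, computation on a single fiber, comparing the definition (\ref{eqn:hermitian}) of $\be_k$ with the Riemann--Hodge bilinear relation $i\int_S\al\we\bar\al>0$ for a nonzero holomorphic $1$-form $\al$ on a Riemann surface.
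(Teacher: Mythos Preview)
Your argument is correct and follows essentially the same route as the paper. Both proofs identify the $J_q$--complex bundle $V_{(q,\bar q)}$ with $E_q\oplus\bar E_{\bar q}$, match this splitting to the positive/negative-definite decomposition for the Hermitian form $\be_k$ (so that $E_q$ is the $\U(a_q)$--summand and hence $x_q=c_1(E_q)$), and then invoke the special-unitary structure group to obtain $c_1(E_q)+c_1(\bar E_{\bar q})=0$, whence $c_1(E_q)=c_1(E_{\bar q})$. The paper packages the first step as an explicit comparison of the Hodge complex structure $J$ with the $G$--induced complex structure $J'$ on $H^{1,0}_q$ and $H^{0,1}_q$ (citing Borel--Hirzebruch for how Chern classes transform), whereas you phrase it as a direct bundle identification; your use of the $\SU$ condition for the equality $c_1(E_q)=c_1(E_{\bar q})$ is slightly more streamlined than the paper's $J=-J'$ computation on $H^{0,1}_q$, but the content is the same. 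Your flagged ``technical obstacle'' (the sign in the definiteness claim) is exactly the point the paper also asserts without a detailed check.
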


\begin{proof}[Proof of Proposition \ref{prop:chern}]
There are two natural complex structures on the bundle $H_1(S;\R)\ra E\ra B$ induced from different complex structures on $H_1(S;\R)$. The first $J$ is the Hodge star operator $\star^2=-1$ on $H^1(S;\R)$, and the second $J'$ is induced by the $G$ action on $H^1(S;\R)$ (this depends on a choice of generator of $G$). The proposition is proved by comparing $J$ and $J'$ and recalling how the definition of the Chern classes is sensitive to a choice of complex structure (see Borel--Hirzebruch \cite[\S9.1]{bh}).

Decompose $H^1(S;\R)=H(1)\op H(-1)\op\bigoplus_{\substack{q^m=1\\\im(q)>0}}H(q,\bar q)$ into isotypic components for the irreducible representations of $G$ over $\R$. (Recall that the simple $\R G$ modules are the trivial representation $V(1)$, the sign representation $V(-1)$ (if $m$ is even), and $V(q,\bar q)=\R[t]/\big(t^2-(q+\bar q)t+1\big)$ for $q^m=1$ such that $\im(q)>0$.)

The complex structure $J$ on $H^1(S;\R)$ induces an isomorphism $H(q,\bar q)\simeq H^1(S;\C)_q=H^{1,0}_q\op H^{0,1}_q$. This decomposition coincides with the decomposition of $H(q,\bar q)$ into positive-definite and negative-definite subspaces for the Hermitian form $\be$ in (\ref{eqn:hermitian}). Since $H^{1,0}$ and $H^{0,1}$ are $+i$ and $-i$ eigenspaces for $J$, the same holds for $H^{1,0}_q$ and $H^{0,1}_q$. This identifies the complex structure $J$ on these two factors. On the other hand, if we view $V=H^{1,0}_q\op H^{0,1}_q$ as a real vector space $V(q,\bar q)^N$, the $G$ action defines another complex structure $J'$ such that for any $v\in V$, the orientation $(v,J'v)$ on $\R\{v,J'v\}$ agrees with the orientation $(v,\tau v)$, where $\tau=e^{2\pi i/m}$ generates $G$. From this description, it follows that $J$ and $J'$ agree on $H^{1,0}_q$, but differ by $-1$ on $H^{0,1}_q$. 

Let $c_1(E_q)$ and $c_1'(E_q)$ denote the Chern class defined using $J$ and $J'$, respectively \cite[\S9.1]{bh}. 
Since $J=J'$ on $H^{1,0}_q$, we have $c_1(E_q)=c_1'(E_q)$ when $\im(q)\ge0$. Furthermore, $c_1'(E_q)=x_q$ by (\ref{eqn:coho}). When $\im(q)<0$, we have $c_1'\big(E_q\big)=-x_q$ because with respect to $J'$ the bundle $E_q\op E_{\bar q}\ra B$ is classified by the map $B\ra B\SU(a_q,b_q)\sim B\text{S}\big(\U(a_q)\ti\U(b_q)\big)$, which implies that $c_1'(E_q)=-c_1'(E_{\bar q})$. Since $J=-J'$ on $H^{0,1}_q$, we have $c_1(E_q)=-c_1'(E_q)=x_q$.
\end{proof}

\subsection{Applying the index formula.} The degree-1 terms of the index formulas (\ref{eqn:indexfamilies}) and (\ref{eqn:index1}) give a system of linear equations:

\begin{equation}\label{eqn:appind1}\sum_{\substack{q^m=1\\ \im(q)\ge0}}c_1(E_q)=\si/4,\end{equation}
and for $1\le r\le m-1$,
\begin{equation}\label{eqn:appind2}c_1(E_1)+(-1)^rc_1(E_{-1})+\sum_{\substack{q^m=1\\ \im(q)>0}} (q^r+\bar q^r)\> c_1(E_q)=\sum_{\substack{1\le j<m/2\\M_j\cup M_{m-j}\neq\es}} \csc^2(r\ta_j/2)\>\eta_j/4
\end{equation}
where $\eta_j$ is defined as in the statement of Theorem \ref{thm:main}, and the term with $c_1(E_{-1})$ appears only when $m$ is even. 

{\it Remark.} The reason we must consider $\eta_j$ is because $\csc^2(\cdot)$ is even, so the coefficient of $\ep_j$ and $\ep_{m-j}$ always agree when both are defined. 

Equations (\ref{eqn:appind1}) and (\ref{eqn:appind2}) define a matrix equation of the form 
\[J\>\left(\begin{array}{c}c_1(E_{\ze^0})\\c_1(E_{\ze^1})\\\vdots\\c_1(E_{\ze^d})\end{array}\right)=K\>\left(\begin{array}{c}\si\\\eta_{j_1}\\\vdots\\\eta_{j_n}\end{array}\right),\]
where $d=\lfloor m/2\rfloor$, $n$ is the number of $1\le j<m/2$ for which $M_j\cup M_{m-j}\neq\es$, $J$ is a $d\ti d$ matrix, and $K$ is a $d\ti(n+1)$ matrix. Note that $n\le\phi(m)/2$. 
%Note that $n+1\le d$ because by definition $M_j=M_{-j}$. 
%Note further that $n\le \phi(m)/2$ because $M_j\neq\es$ only when $j$ and $m$ are relatively prime.

We wish to show that $\im\big[H^2(\Sp^G)\ra H^2(\Mod^G)\big]=\Q\{\si,\eta_{j_1},\ld,\eta_{j_n}\}$. First we show that $J$ is invertible, which implies the containment $\subseteq$. Then we show $\rk(K)\ge n+1$, which implies the other containment. 

\begin{prop}
$J$ is invertible. 
\end{prop}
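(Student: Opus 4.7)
The matrix $J$ encodes the degree-one part of the index equations (\ref{eqn:appind1})--(\ref{eqn:appind2}): its $(r,s)$-entry is the coefficient of $c_1(E_{\zeta^s})$, where $\zeta=e^{2\pi i/m}$. Row $0$, coming from (\ref{eqn:appind1}), equals $(1,1,\ldots,1)$. For $r\ge 1$ (coming from (\ref{eqn:appind2})), row $r$ has entry $1$ in column $s=0$, entry $\zeta^{rs}+\zeta^{-rs}=2\cos(2\pi rs/m)$ in columns $0<s<m/2$, and entry $(-1)^r$ in column $s=m/2$ when $m$ is even. Apart from the self-conjugate columns $s\in\{0,m/2\}$ being halved, row $r$ is the list of character values of the real irreducible representations of $\Z/m\Z$ evaluated at $g^r$; this suggests reducing invertibility of $J$ to discrete Fourier analysis on $\Z/m\Z$, in the spirit of the circulant-matrix remark at the beginning of this section.

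The plan is to show $\ker J=0$. Suppose $Jy=0$ for $y=(y_0,\ldots,y_d)^T$, and extend $y$ to a symmetric function $\tilde y:\Z/m\Z\ra\C$ by setting $\tilde y_s=y_s$ for $0\le s\le d$ and $\tilde y_s=y_{m-s}$ for $d<s<m$; the overlap at $s=m/2$ in the even case is consistent. One then checks directly that
\[
(Jy)_r\;=\;\sum_{s\in\Z/m\Z}\tilde y_s\,\zeta^{rs}\;=\;\hat{\tilde y}_r\qquad\text{for }r=1,\ldots,d,
\]
the halved coefficients in the self-conjugate columns being exactly compensated by the fact that those $y_s$ contribute to $\tilde y$ at only a single position, while each $y_s$ with $0<s<m/2$ contributes at two conjugate positions.

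The symmetry $\tilde y_s=\tilde y_{-s}$ gives $\hat{\tilde y}_r=\hat{\tilde y}_{m-r}$, so the hypothesis $\hat{\tilde y}_r=0$ for $r=1,\ldots,d$ upgrades to $\hat{\tilde y}_r=0$ for all $r\in\{1,\ldots,m-1\}$. Fourier inversion on $\Z/m\Z$ then forces $\tilde y$ to be constant, so $y_0=y_1=\cdots=y_d=c$ for some scalar $c$; the remaining row-$0$ equation $\sum_{s=0}^d y_s=(d+1)c=0$ gives $c=0$, and hence $y=0$. The main subtlety in carrying this out is the bookkeeping at the self-conjugate eigenvalues $q=\pm 1$, where the Hodge eigenbundle has no distinct conjugate partner and hence appears with coefficient $1$ rather than $2$ in (\ref{eqn:appind2}); this asymmetry is exactly what dictates the piecewise definition of $\tilde y$ and turns the argument into a clean Fourier inversion.
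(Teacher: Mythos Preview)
Your proof is correct and takes essentially the same approach as the paper's: both reduce invertibility of $J$ to the linear independence of the irreducible characters of $\Z/m\Z$, using the symmetry $\chi_g(V_j)=\chi_{g^{-1}}(V_j)$ (equivalently, your $\tilde y_s=\tilde y_{-s}$) to pass from the half-range $r\in\{1,\ldots,d\}$ to all nonzero $r$. The only difference is packaging---the paper invokes ``characters of irreducible representations are linearly independent'' directly, while you phrase the identical fact as discrete Fourier inversion on $\Z/m\Z$.
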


\begin{proof}
A column of $J$ has the form $\big(\chi_g(V_0)\>\>\chi_g(V_1)\>\>\cd\>\>\chi_g(V_d)\big)^T$ for fixed $g\in G\sbs\C^\ti$ with $\im(g)\ge0$, and where $V_j=\rho_{\ze^j}+\rho_{\ze^{-j}}$ for $1\le j<m/2$ and $V_j=\rho_{\ze^j}$ for $j=0, m/2$. 

If the columns are dependent, then there are constants $a_0,\ld,a_d$ so that 
\[a_0\>\chi_g(V_0)+\cd+a_d\>\chi_g(V_d)=0\]
for $g\in G\sbs\C^\ti$ with $\im(g)\ge0$. But then this equation holds for all $g\in G$ because $\chi_g(V_j)=\chi_{g^{-1}}(V_j)$. But this is impossible because the characters of irreducible representations of $G$ are linearly independent. 
\end{proof}

\begin{prop}
$\rk(K)\ge n+1$. 
\end{prop}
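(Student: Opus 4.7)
My plan is to separate off the column of $K$ corresponding to $\sigma$, for which independence is trivial, and then to show that the $n$ remaining columns are linearly independent by pairing them with Dirichlet characters and invoking the non-vanishing of $L(2,\chi)$.

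First I will verify that the $\sigma$-column of $K$ equals $\tfrac14 e_1$, supported on the row coming from the defect equation (\ref{eqn:appind1}); since each $\eta_{j_i}$-column vanishes on that row, it remains to prove linear independence of these $n$ columns. Setting $F(k)=\csc^2(\pi k/m)$ for $k\not\equiv 0\pmod m$ (with $F(0)=0$), the column associated to $\eta_{j_i}$ corresponds to the function $r\mapsto F(rj_i)$ on $\Z/m\Z$, so the problem reduces to showing that these $n$ functions are linearly independent.

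Assume therefore that $\sum_i a_i F(rj_i)=0$ for all $r$. For every even Dirichlet character $\chi$ mod $m$ (that is, $\chi(-1)=1$), I will multiply the relation by $\chi(r)$ and sum over $r\in(\Z/m\Z)^\times$. For each fixed $i$ the substitution $r'=rj_i$ turns $\sum_r\chi(r)F(rj_i)$ into $\chi(j_i^{-1})\sum_{r'}\chi(r')F(r')$, so I obtain
\[\Bigl(\sum_i a_i\chi(j_i^{-1})\Bigr)\sum_{k=1}^{m-1}\chi(k)\csc^2(\pi k/m)=0.\]
The inner sum evaluates to $2m^2L(2,\chi)/\pi^2$, which I will derive from the Hurwitz reflection identity $\zeta(2,a)+\zeta(2,1-a)=\pi^2\csc^2(\pi a)$ combined with $L(2,\chi)=m^{-2}\sum_k\chi(k)\zeta(2,k/m)$. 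Because $L(2,\chi)\neq 0$ for every Dirichlet character (its Euler product converges absolutely at $s=2$), I conclude $\sum_i a_i\chi(j_i^{-1})=0$ for every even $\chi$. The even Dirichlet characters mod $m$ are exactly the characters of the abelian group $H=(\Z/m\Z)^\times/\{\pm 1\}$, and the classes $\overline{j_i^{-1}}$ are distinct in $H$ (since the $j_i$ lie in the transversal $\{1,\ldots,\lfloor m/2\rfloor\}$); linear independence of characters of $H$ then forces $a_i=0$ for all $i$.

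The main obstacle is conceptual rather than computational: a direct attempt to exhibit an explicit invertible $n\times n$ submatrix of $K'$ runs into the fact that submatrices of the ``convolution matrix'' $\bigl(F(gh)\bigr)_{g,h\in H}$ need not be invertible, and the elementary combinatorics of such determinants quickly becomes unpleasant. Reformulating the question via the Dirichlet-character pairing replaces it with the classical non-vanishing $L(2,\chi)\neq 0$, which bypasses these difficulties.
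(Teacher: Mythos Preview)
Your argument is correct and takes a genuinely different route from the paper's. Both proofs begin by splitting off the $\sigma$-column and reducing to the linear independence of the functions $r\mapsto\csc^2(\pi r j_i/m)$ on $(\Z/m\Z)^\times/\{\pm1\}$. From there the paper observes that the full $\phi(m)/2\times\phi(m)/2$ matrix $\bigl(\csc^2(\pi k\ell/m)\bigr)$ is, after reordering rows and columns, the convolution matrix of $\csc^2(\pi\,\cdot\,/m)$ on the abelian group $H=(\Z/m\Z)^\times/\{\pm1\}$; when $H$ is cyclic this is circulant, and in general it is an iterated block-circulant matrix, whose eigenvalues the paper writes down and argues are nonzero. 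You instead pair a putative dependence relation against the characters of $H$ directly, and your key input is the closed-form evaluation $\sum_k\chi(k)\csc^2(\pi k/m)=2m^2L(2,\chi)/\pi^2$ together with $L(2,\chi)\neq0$. The two approaches are really the same fact seen from two sides: the eigenvalues of a convolution matrix on $H$ are precisely the Fourier coefficients $\hat f(\chi)=\sum_{h}\chi(h)f(h)$, and your computation identifies these with nonzero multiples of $L(2,\chi)$. What your route buys is uniformity---no case analysis on the structure of $(\Z/m\Z)^\times$ and no block-circulant bookkeeping---and a clean reason for nonvanishing, namely absolute convergence of the Euler product at $s=2$. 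The paper's route is more self-contained in that it avoids $L$-functions, but its endgame (showing each eigenvalue $\sum_i c_i\omega^{ij}$ is nonzero) is where the work really lies, and your identification with $L(2,\chi)$ makes that step transparent.
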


Using (\ref{eqn:appind1}) and (\ref{eqn:appind2}), note that $K=\left(\begin{array}{cc}1/4&0\\0&K'\end{array}\right)$, where $K'$ is an $(d-1)\ti n$ matrix. From inspection of (\ref{eqn:appind2}), to prove the proposition it suffices to show the following proposition. 

\begin{prop}
Fix $m\ge2$. Let $V\simeq\R^{\phi(m)/2}$ be a real vector space with basis $\{e_\ell\}$ for $1\le \ell<m/2$ and $\gcd(\ell,m)=1$. Then the vectors
%$e_1,\ld,e_{\phi(m)/2}$. The vectors 
\[v_k=\sum_{\substack{1\le \ell<m/2\\\gcd(\ell,m)=1}}\csc^2\left(\fr{\pi k\ell}{m}\right) e_\ell\]
$1\le k\le \phi(m)/2$ also form a basis for $V$. %are linearly independent. 
\end{prop}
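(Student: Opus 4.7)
The plan is to expand the matrix $M := [\csc^2(\pi k\ell/m)]$ in the basis of even Dirichlet characters modulo $m$ and reduce its invertibility to non-vanishing of Dirichlet $L$-values at $s = 2$. The $\phi(m)/2$ even Dirichlet characters $\widetilde\chi$ modulo $m$ form a basis for the space of even functions on $(\Z/m)^*/\{\pm 1\}$, so it suffices to show that the matrix of Fourier coefficients
\[\alpha_k(\widetilde\chi) := \sum_{\ell \in (\Z/m)^*} \csc^2(\pi k\ell/m)\, \widetilde\chi(\ell),\]
indexed by $k \in \{1, \ldots, \phi(m)/2\}$ and $\widetilde\chi$, is invertible.

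The coefficients are computed by unfolding $L$-series. From the partial-fraction identity $\csc^2(\pi x) = \pi^{-2} \sum_{n \in \Z} (x-n)^{-2}$ and the substitution $u = k\ell$, one obtains for $\gcd(k,m)=1$ that $\alpha_k(\widetilde\chi) = \widetilde\chi(k)^{-1} \cdot 2(m^2/\pi^2)\, L(2,\widetilde\chi)$; this is nonzero because $s=2$ lies in the domain of absolute convergence of the Euler product for $L(s,\widetilde\chi)$. When $\gcd(k,m)=d>1$ and $k = dk'$, the summand depends on $\ell$ only modulo $m/d$, so orthogonality of characters on the kernel of $(\Z/m)^* \twoheadrightarrow (\Z/(m/d))^*$ forces $\alpha_k(\widetilde\chi) = 0$ unless $\widetilde\chi$ is induced from a character $\widetilde\chi'$ modulo $m/d$; in that case $\alpha_k(\widetilde\chi) = (\phi(m)/\phi(m/d))\cdot 2((m/d)^2/\pi^2)\,\widetilde\chi'(k')^{-1}\, L(2,\widetilde\chi')$, again nonzero by the Euler-product argument.

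Ordering rows by increasing $d = \gcd(k,m)$ and columns by decreasing conductor of $\widetilde\chi$, the matrix $[\alpha_k(\widetilde\chi)]$ becomes block lower-triangular: a row $k$ at level $d$ has nonzero entries only in columns whose character has conductor dividing $m/d$. The main obstacle is that with the literal index set $\{1,\ldots,\phi(m)/2\}$ these blocks are rectangular, not square --- for instance at $m = 15$ the level $d = 1$ contributes three rows ($k \in \{1,2,4\}$) while only two primitive even characters have conductor $15$. Invertibility thus cannot be read off block-by-block, and I would instead proceed by induction on the number of prime factors of $m$: the identity $\alpha_k(\widetilde\chi) = (\phi(m)/\phi(m/d))\cdot \alpha^{(m/d)}_{k/d}(\widetilde\chi')$ identifies each non-unit row block, up to a nonzero scalar, with the matrix appearing in the proposition for the smaller modulus $m/d$. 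Combining the inductive hypothesis with the Euler-factor corrections that distinguish $L(2,\widetilde\chi)$ as a character modulo $m$ from $L(2,\widetilde\chi')$ modulo $m/d$ supplies the additional rank needed because unit rows and non-unit rows inject into the common column space with distinct character-evaluation patterns. The base case $m$ prime is immediate: then all $k$ in $\{1,\ldots,(m-1)/2\}$ are units, the matrix is the Cayley group matrix for $(\Z/m)^*/\{\pm 1\}$, and its determinant factors as $\prod_{\widetilde\chi\text{ even}} L(2,\widetilde\chi) \neq 0$.
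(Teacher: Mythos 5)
Your reduction and your evaluation of the character sums are correct: expanding each row in the even Dirichlet characters of $(\Z/m\Z)^\times/\{\pm 1\}$ reduces invertibility to that of $[\alpha_k(\widetilde\chi)]$, and the unfolding of $\csc^2(\pi x)=\pi^{-2}\sum_{n\in\Z}(x-n)^{-2}$ really does give $\alpha_k(\widetilde\chi)=\overline{\widetilde\chi(k)}\cdot 2(m^2/\pi^2)L(2,\widetilde\chi)\ne 0$ for $\gcd(k,m)=1$, with the analogous statement at level $d=\gcd(k,m)>1$. This is the paper's argument in different clothing: the paper diagonalizes the multiplication table of $(\Z/m\Z)^\times/\{\pm1\}$ as an (iterated block) circulant matrix, and its eigenvalues are precisely your sums $\alpha_k(\widetilde\chi)$; your identification of them with $L$-values at $s=2$ is in fact a cleaner reason for their non-vanishing than the paper's polynomial argument. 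For $m$ prime the two proofs coincide and both are complete.

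The gap is exactly where you flag it, and your sketch does not close it. For composite $m$ the row set $\{1,\dots,\phi(m)/2\}$ contains non-units, the matrix $[\alpha_k(\widetilde\chi)]$ is block triangular with \emph{rectangular} blocks, and invertibility does not follow from non-vanishing of the individual entries. The proposed induction does not apply as stated: the level-$d$ rows are indexed by units $k'$ of $\Z/(m/d)\Z$ lying in $\{1,\dots,\phi(m)/(2d)\}$, which is not the index set $\{1,\dots,\phi(m/d)/2\}$ of the proposition for the modulus $m/d$, so the inductive hypothesis says nothing about that block; and the closing claim that ``Euler-factor corrections supply the additional rank'' is an assertion, not an argument. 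What actually has to be excluded is a linear relation mixing unit and non-unit rows, i.e.\ one must show that no nonzero combination of the vectors $\bigl(c_{\widetilde\chi}\,\overline{\widetilde\chi(k)}\bigr)_{\widetilde\chi}$ (unit $k$) can equal a combination of vectors supported on the low-conductor columns with entries $c'_{\widetilde\chi,d}\,\overline{\widetilde\chi'(k')}$; nothing in the proposal establishes this (your own $m=15$ example, with a $3\times 2$ top block and a $1\times 2$ bottom block, is precisely a case where it is not automatic). As written, the argument proves the proposition only when every integer in $\{1,\dots,\phi(m)/2\}$ is coprime to $m$. For comparison, the paper's Case 3 handles composite $m$ by identifying the matrix with the group multiplication table of $(\Z/m\Z)^\times/\{\pm1\}$ and computing eigenvalues of the resulting iterated block circulant matrix --- an identification that is itself only literal when all such $k$ are units --- so the composite case is the delicate point on either route, and your write-up leaves it open.
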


\begin{proof}
We will denote $\Z/m\Z$ by $C_m$. For simplicity we start with the case $m=p$ is prime. The case $m=p^n$ is a prime power follows easily from this. Then we explain the general case. 

{\it Case 1:} $m=p$ is prime. Let $q=\fr{p-1}{2}$. Consider functions $f_k:(C_p)^\ti\ra\R$ defined by $f_k(x)=\csc^2\left(\fr{k\pi}{p}\>x\right)$, and $A=(A_{k,\ell})$ be the $q\times q$ matrix $A_{k,\ell}=f_k(\ell)$ for $1\le k,\ell\le q$. To prove the proposition, it is enough to show that $A$ is invertible. 

To this end, define another $q\times q$ matrix $B$ as follows. Consider the surjective homomorphism $\phi:(C_p)^\times\simeq C_{p-1}\rightarrow C_q$. For $0\le i,j\le q-1$, define $B_{ij}$ by $\csc^2\left(\frac{\pi}{p}\cdot y\right)$, where $\phi(y)=i+j$. This is well defined because $\csc^2(x)$ is an even function. 

Now observe
\begin{enumerate} 
\item $A$ and $B$ are the same matrix, up to permuting rows and columns. Thus it suffices to show that $\det(B)\neq0$. (We will show the eigenvalues of $B$ are all nonzero.) 

\item $B$ is a \emph{circulant matrix}, up to permuting rows and columns (see \cite{wiki:circulant} for the definition). This is easy to see because $B$ is obtained by taking the multiplication table for $\mathbb Z/q$ and applying a fixed function to each entry. (The multiplication table of a cyclic group is circulant up to permuting the rows.)
\end{enumerate} 

Now the eigenvalues/eigenvectors of a circulant matrix are easily computed \cite{wiki:circulant}. The eigenvalues have the form $\la_j=c_0+c_1\omega^j+c_2\omega^{2j}+\cdots+c_{q-1}\omega^{(q-1)j}$, where $\omega=e^{2\pi i/q}$ and $0\le j\le q-1$ and the $c_i$ are in bijection with $\csc^2\left(\frac{k\pi}{p}\right)$, $1\le k\le q$. If $\la_j=0$, then $\omega^j$ is a solution to the polynomial $P(x)=c_0+c_1x+\cdots+c_{q-1}x^{q-1}$ for some $j$. This is possible if and only if $c_0=c_1=\cdots=c_{q-1}$, which is not the case.

{\it Case 2:} $m=p^n$ is a prime power. An important feature of the above argument is that when $m$ is prime, $(C_m)^\ti\simeq C_{\phi(m)}$ is cyclic, as is $(C_m)^\ti/\{\pm1\}$, so its multiplication table is given by a circulant matrix whose determinant is easy to compute (even after applying a fixed function to each coordinate). 

When $p$ is an odd prime, then $(C_{p^n})^\ti\simeq C_{\phi(p^n)}$ is cyclic, so we may repeat the argument of Case 1. 

When $p=2$, the group $(C_{2^n})^\ti\simeq C_2\ti C_{2^{n-2}}$ is \emph{not} cyclic. However, the fact that $f_k$ is even implies that it factors through $(C_{2^n})^\ti/\{\pm1\}$, and the subgroup $\{\pm1\}<(C_{2^n})^\ti$ corresponds to the subgroup $C_2\ti\{0\}<C_2\ti C_{2^{n-2}}$. This means $f_k:C_2\ti C_{2^{n-2}}\ra\R$ factors though the cyclic group $C_{2^{n-2}}$, and we can again apply the argument from Case 1. 

{\it Case 3:} $m$ is arbitrary. In this case we cannot assume that $(C_m)^\ti$ is cyclic, and in most cases the multiplication table for $(C_m)^\ti/\{\pm1\}$ will not be circulant. However, if we write $m=p_1^{n_1}\cd p_r^{n_r}$, then using the isomorphism $(C_m)^\ti\simeq (C_{p_1^{n_1}})^\ti\ti\cd\ti (C_{p_r^{n_r}})^\ti$, the multiplication table for $(C_m)^\ti/\{\pm1\}$ may be expressed as a special kind of block circulant matrix. Having this block circulant form will allow us to apply the argument of Case 1 iteratively. 

We begin by examining what the structure of the multiplication table of a product of cyclic groups. Fix a finite group $F$ and a cyclic group $C_d=\lan t\ran$. If the multiplication table for $F$ is given by a matrix $A$, then the multiplication table for $F\ti C_d$ has the form
\[\left(\begin{array}{cccccccc}
A&tA&\cd&t^{d-1}A\\
tA&t^2A&\cd&A\\
\vdots&&\ddots&\vdots\\
t^{d-1}A&A&\cd&t^{d-2}A
\end{array}\right)\]
This matrix becomes block circulant after permuting the rows. Thus the multiplication table of a product of cyclic groups is an iterated block circulant matrix. 

Next we determine the eigenvalues of a block circulant matrix. Fix $d,n\ge1$, fix $A_0,\ld,A_{d-1}\in M_{n}(\R)$, and consider the block circulant matrix 
\[B=\left(\begin{array}{cccccccc}
A_0&A_1&\cd&A_{d-1}\\
A_{d-1}&A_0&\cd&A_{d-2}\\
\vdots&&\ddots&\vdots\\
A_1&A_2&\cd&A_0
\end{array}\right)\]
Suppose that the matrices $A_i$ share common eigenvectors $x_0,\ld,x_{n-1}$, so that $A_ix_j=\la_{ij}x_j$. Denoting $\ze=e^{2\pi i/d}$, the eigenvectors of $B$ are
\[x_{kj}=\left(\begin{array}{c}x_j\\\ze^k x_j\\\vdots\\\ze^{k(d-1)}x_j
\end{array}\right)\]
for $0\le k\le d-1$ and $0\le j\le n-1$, and the eigenvalues are 
\[\eta_{kj}=\la_{0k}+\la_{1k}\ze^j+\cd+\la_{m-1,k}\ze^{j(d-1)}\]
These facts are easily checked. 

Now the group $(C_m)^\ti/\{\pm1\}$ is a product of cyclic groups, so its multiplication table is an iterated block circulant matrix $B_0$. The matrix $A=\big(f_k(\ell)\big)$ is equivalent to the matrix $B$ obtained by applying $\csc^2(\fr{\pi}{m}\cdot)$ to each entry of $B_0$. Since all $n\ti n$ circulant matrices have the same eigenvectors, the above computation applies for computing the eigenvalues of $B$. Now, as in Case 1, the eigenvalues are given as degree $m-1$ polynomials $P(\exp^{2\pi i/m})$ with (nonconstant!) coefficients among the $f_k(\ell)$, so $\det (B)\neq0$. 
\end{proof}

Since $J$ is invertible and $\rk(K)\ge n+1$, we conclude that $H^2(\Sp^G)$ surjects to the subspace of $H^2(\Mod^G)$ generated by $\si$ and $\{\eta_j\}$, which finishes the proof of Theorem \ref{thm:main}.

\section{Further application of the index formula}\label{sec:app}

\subsection{The real points of $\Sp^G$.} We remark on how the degree-0 term of the index formula can be used to determine the group $\bb G$ that contains $\Sp^G$ as a lattice. This is an elaboration of a remark in \cite[\S3]{mcmullen_braid} and will be used later in this section. 

{\it Chevalley--Weil.} First one can use the Chevalley--Weil algorithm to determine the character $\chi_H$ of $H=H_1(S;\R)$. Obviously $\chi_H(e)=\dim H=2g$, and by the Lefschetz formula, $\chi_H(g)=2-\#\Fix(g)$ for $g\neq e$. Since a representation is determined by its character, this gives the integers $n_q$ in the decomposition 
\[H_1(S;\R)=V(1)^{n_1}\op \bigoplus_{\substack{q^m=1\\\im(q)>0}}V(q,\bar q)^{n_q}\op V(-1)^{n_{-1}}.\]
Here $V(\pm1)$ are the trivial/alternating representations, and $V(q,\bar q)=\R[t]/(t^2-(q+\bar q)t+1)$. 

{\it Hodge star and index formula.} The Hodge star gives a complex structure to $H_1(S;\R)$, and hence an isomorphism 
\[V(q,\bar q)^{n_q}\simeq V(q)^{a_q}\op V(\bar q)^{b_q}\]
for each $q$ with $\im(q)>0$, where $V(q)=\C[t]/(t-q)$. The numbers $a_q,b_q$ can be computed using the degree-0 term of the index formula (\ref{eqn:index1}) 
\begin{equation}\label{eqn:deg0}\sum_{q^m=1,\>\im(q)>0}(a_q-b_q)(q^r-\bar q^r)=-i\sum_{\substack{1\le j\le m-1\\Z_j\neq\es}}\cot(r\>\ta_j/2)\cdot|Z_j|,\end{equation}
where $Z_j$ is as in the statement of Theorem \ref{thm:index}. 

{\bf Example.} Let $G=\Z/m\Z$ act on a closed surface $S$ of genus $g=\fr{(m-1)(m-2)}{2}+mh$ with $m$ fixed points. These surfaces arise in Morita's $m$-construction \cite[\S4.3]{morita_book}. 

An explicit model for $S$ can be obtained as follows. Take $m$ disks, stacked horizontally, and attach $m$ strips between each pair of adjacent levels, as pictured in Figure \ref{fig:surf} (in the case $m=5$). This gives a surface of genus $\fr{(m-1)(m-2)}{2}$ with $m$ boundary components. The rotation by $2\pi/m$ on the disk extends to an action of $\Z/m\Z$ on this surface with one fixed point in each disk. Along each boundary component, we can attach a genus-$h$ surface (with one boundary component) to obtain a closed surface of genus $\fr{(m-1)(m-2)}{2}+mh$ with an action of $\Z/m\Z$ with $m$ fixed points. 

\begin{figure}[h!]
\labellist 
\small\hair 2pt 
%\pinlabel $q$ at 52 297
%\pinlabel $1$ at 138 269
\endlabellist 
  \centering
\includegraphics[scale=.6]{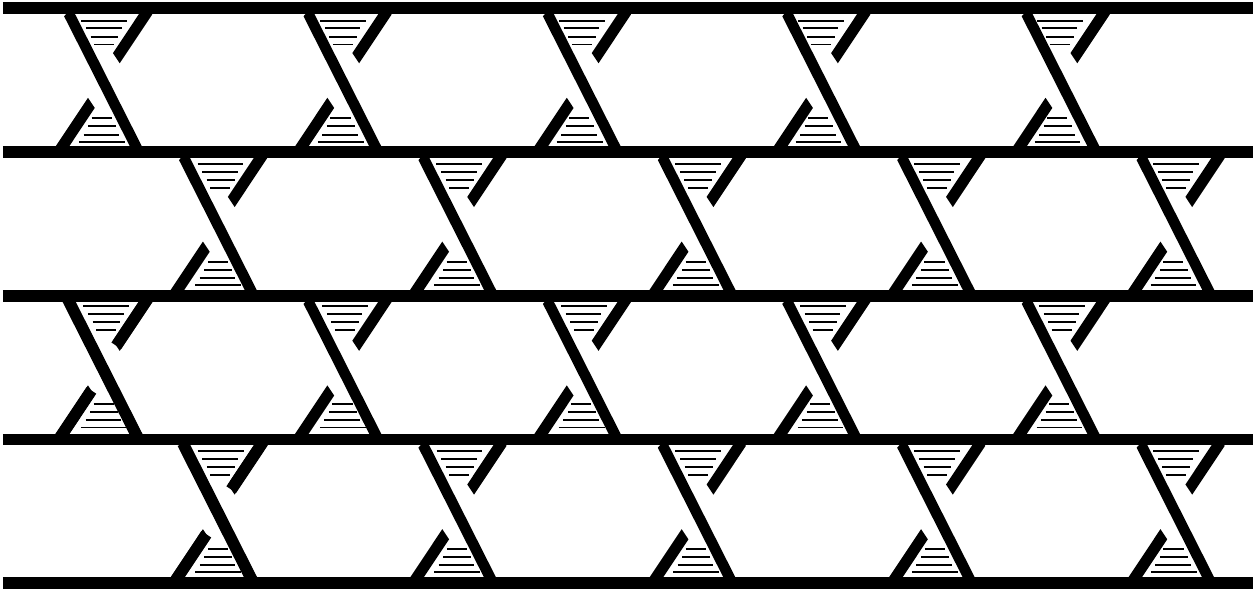}
      \caption{Schematic of a genus-6 surface with 5 boundary components and an action of $\Z/5\Z$.}
      \label{fig:surf}
\end{figure}

Using Chevalley--Weil, one easily computes 
\[H_1(S;\Q)=\Q^{2h}\op\bigoplus_{\substack{k\mid m\\k\ge2}}\Q(\ze_k)^{2h+m-2}.\]This is the decomposition $H_1(S;\Q)=\bigoplus_{k\mid m}H_k$ described in Section \ref{sec:arithmetic}. Applying (\ref{eqn:deg0}), we find that 
\[\prod_{\substack{k\mid m\\2<k\le m/2}}\U(H_k,\be_k)_{\ca O_k}\]
is a lattice in $\bb G(\R)=\prod_{i=0}^N\SU(h+i, h+m-2-i)$, where $N=\lfloor\fr{m-1}{2}\rfloor$. Equivalently, the factors in $\bb G(\R)$ are of the form $\SU(u+v_q)$ for $u,v_q\in\Z^2$, where $u=(h,h+m-2)$ and for each $q^m=1$ with $\im(q)>0$, we define $v_q=(a,-a)$ where $a$ the number of $m$-th roots of unity above the line from 1 to $q$ in $\C$. See Figure \ref{fig:hermitian} and also \cite[Figure 1]{mcmullen_braid}. 

\begin{figure}[h!]
\labellist 
\small\hair 2pt 
\pinlabel $q$ at 52 297
\pinlabel $1$ at 138 269
\endlabellist 
  \centering
\includegraphics[scale=.6]{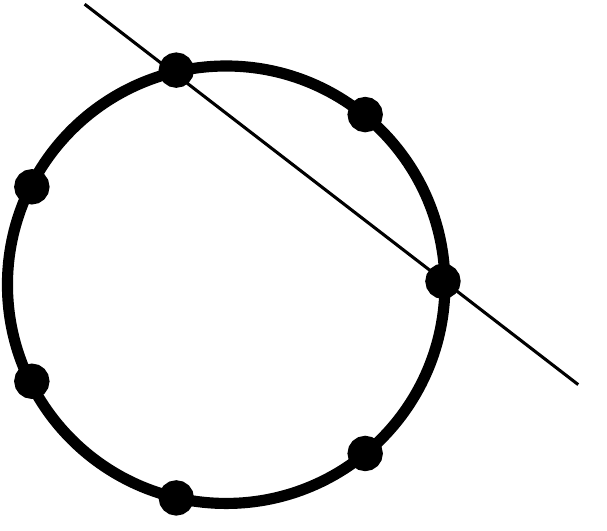}
      \caption{For $m=7$ the group $\bb G(\R)=\SU(h,h+5)\ti\SU(h+1,h+4)\ti\SU(h+2,h+3)$.}
      \label{fig:hermitian}
\end{figure}

\subsection{Relation to Hirzebruch's signature formula.}

Hirzebruch \cite{hirzebruch} explained how the signature changes in a branched cover. In this section we derive this result for surface bundles from our viewpoint. For simplicity we restrict to 2-fold branched covers. 

Let $M$ be a 4-manifold with a $G=\Z/2\Z$ action with fixed set $\Fix(G)=M_{0}$. In this case, Hirzebruch proved that 
\begin{equation}\label{eqn:hirz}\si(M)=2\>\si(M/G)-\si(M_0\cdot M_0),\end{equation}
where $M_0\cdot M_0$ is the self-intersection (which is a homology class; its signature is well defined). This formula applies in the special case when $M$ is the total space of an $(S,G)$-bundle over a surface. Our main observation here is that the terms $\si(M/G)$, $\si(M_0\cdot M_0)$ can be understood in terms of cohomology of the arithmetic group $\Sp^G=\Sp_{2h}(\Z)\ti\Sp_{2h'}(\Z)$. 

To illustrate this, consider a $G$ action on a genus-$2h$ surface with two fixed points $Z=\{z_1,z_2\}$. The quotient $\mu:S\ra\bar S$ has genus $h$. Let $\bar Z=\mu(Z)$. In this case $\Sp^G=\Sp_{2h}(\Z)\ti\Sp_{2h}(\Z)$, and we have a commutative diagram
\[\begin{xy}
(20,0)*+{\Mod(S,Z)^G}="B";
(60,0)*+{\Sp^G}="E";
(20,-20)*+{\Mod_\mu(\bar S,\bar Z)}="D";
(60,-20)*+{\Sp_{2h}(\Z)}="F";
{\ar "B";"D"}?*!/^3mm/{\phi};
{\ar"B";"E"}?*!/_3mm/{f};
{\ar"D";"F"}?*!/_3mm/{g};
{\ar"E";"F"}?*!/_3mm/{\psi};
\end{xy}\]
The cohomology $H^2\big(\Sp^G;\Q\big)$ is generated by $\{x_1,x_{-1}\}$ as in Corollary \ref{cor:H2}. Also $H^2\big(\Sp_{2h}(\Z);\Q\big)\simeq\Q\{y_1\}$ for $h\ge3$. Let $\si$ and $\bar\si$ be the signature classes in the cohomology of $\Mod(S,Z)^G$ and $\Mod_\mu(\bar S,\bar Z)$, respectively, and let $e_i\in H^2\big(\Mod(S,Z)^G\big)$ be the Euler class at the fixed point $z_i$ for $i=1,2$.

By the non-equivariant version of the index theorem $g^*(y_1)=\bar\si/4$. Hirzebruch's formula will come from determining $\phi^*(\bar\si)$. Since the diagram commutes and $\psi^*(y_1)=x_1$, we want to compute  $f^*(x_1)$. By the index formulas (\ref{eqn:appind1}) and (\ref{eqn:appind2}), we have $f^*(x_1+x_{-1})=\si/4$ and $f^*(x_1-x_{-1})=(e_1+e_2)/4$, so 
\begin{equation}\label{eqn:hirz2}\phi^*(\bar\si)=f^*(4\>x_1)=\fr{\si+e}{2},\end{equation} where $e=e_1+e_2$.

To conclude, let $M\ra B$ be an $S$ bundle with $B$ a surface and with monodromy $\phi:\pi_1(B)\ra \Mod(S,Z)^G$. The fixed set $M_0=\Fix(G)$ is a surface and $\phi^*(e)\in H^2(B)$ measures its self-intersection. Then (\ref{eqn:hirz2}) gives \[\sig(M/G)=\fr{\sig(M)}{2}+\fr{\#(M_0\cdot M_0)}{2}.\]
Since the signature of a 0-manifold is the number of points, this is the same as Hirzebruch's formula (\ref{eqn:hirz}).

\subsection{Toledo invariants of surface group representations.} The Toledo invariant $\tau$ is an integer invariant of a representation $\al :\pi_1(\Si)\ra G$, where $\Si$ is a closed oriented surface (genus $\ge1$) and $G$ is a \emph{Hermitian} Lie group. In this section we will be interested in the case $G=\SU(p,q)$ with $1\le p\le q$. To define $\tau(\al )$, first construct a smooth $\al $-equivariant map $f:\wtil\Si\ra X$, where $\wtil\Si$ is the universal cover of $\Si$ and $X=\SU(p,q)/S\big(\U(p)\ti\U(q)\big)$ is the symmetric space associated to $G$. The \emph{Toledo invariant} is defined as 
\[\tau(\al )=\fr{1}{2\pi}\int_Ff^*\om,\]
where $\om$ is the K\"ahler form of $X$ and $F\sbs\wtil\Si$ a fundamental domain for the action of $\pi_1(\Si)$. 

Domic--Toledo \cite{domic-toledo} showed that $|\tau(\al )|\le -p\>\chi(\Si)$, and Bradlow--Garcia-Prada--Gothen \cite{bgpg} have shown that components of the representation variety $\Hom\big(\pi_1(\Si),G\big)/G$ are in bijection with values of achieved by $\tau$. Here we simply observe that the Atiyah--Kodaira construction gives examples of surface group representations whose Toledo invariant can be computed using the index formula. 

We'll explain this in a special case (see \cite[\S4.3]{morita_book} for a general discussion of the Atiyah--Kodaira construction). Let $G=\lan\tau\ran\simeq \Z/7\Z$ and let $\bar S=S_h$ be a closed surface with a free $\Z/7\Z$ action. The product bundle $\bar S\ti\bar S\ra\bar S$ admits 7 disjoint sections $\Ga_{\bbm1},\Ga_{\tau},\ld,\Ga_{\tau^6}$, where $\Ga_f$ denotes the graph of $f:\bar S\ra\bar S$. In order to branch over $\bigcup \Ga_{\tau^i}$, we must first pass to a cover. Let $p:\Si\ti \bar S$ be the $\Z/7\Z$ homology cover ($\Si$ has genus $7^{2h}(h-1)+1$). The bundle $\Si\ti\bar S$ has sections $\Ga_{p},\Ga_{\tau p},\ld,\Ga_{\tau^6p}$, and admits a $\Z/7\Z$ branched cover $M\ra \Si\ti\bar S$ with branching locus $\bigcup\Ga_{\tau^ip}$. Projecting $M\ra\Si\ti\bar S\ra\Si$ defines a bundle with fiber $S$, which is a 7-fold branched cover $\mu:S\ra\bar S$ branched along 7 points ($S$ has genus $7h+15$). The homology $H_1(S;\Q)$ is isomorphic to $\Q^{2h}\op\Q(\ze_7)^{2h+5}$ as a $G$-module. In this case, $\Sp^G=\Sp_{2g}(\Z)\ti\Ga$, where $\Ga$ is an irreducible lattice in $\SU(h,h+5)\ti\SU(h+1,h+4)\ti\SU(h+2,h+3)$. Thus we have a map 
\[\al :\pi_1(\Si)\ra\SU(h,h+5)\ti\SU(h+1,h+4)\ti\SU(h+2,h+3).\]
Let $\al _i$ be the representation obtained by projecting to the $i$-th factor, $i=1,2,3$. By the index formula, the Toledo invariants are given by 
\[\tau(\al _1)=\fr{3}{112}\si,\>\>\>\>\>\>\tau(\al _2)=\fr{5}{112}\si,\>\>\>\>\>\>\tau(\al _3)=\fr{6}{112}\si,\]
where $\si=\sig(M)$. 

{\it Remark.} The Toledo invariants of representations obtained in this way will never have maximal Toledo invariant. This is because the Gromov norm of the Toledo class decreases when pulled back the mapping class group \cite{kotschick}, so in fact, no representation $\pi_1(\Si)\ra\SU(p,q)$ that factors through $\Mod(S)$ will be maximal. However, one could also ask whether these representations are maximal among representation that factor though $\Mod(S)$. 

\subsection{Cobordism invariants.}

Church--Farb--Thibault \cite{cft} show that the odd MMM classes $\kappa_{2i-1}$ are \emph{cobordism invariants}. This means that for an $S$ bundle $M^{4i}\ra B$, the characteristic number $\kappa_{2i-1}^{\#}(M\ra B)$ depends only on the cobordism class of $M$. In particular, the class $\kappa_{2i-1}$ cannot distinguish between different fiberings of a $4i$-manifold $M$. %This is evident in the case $i=1$ since then $e_1(M)=3\sig(M)$ and the signature is a cobordism invariant. 

If $M\ra B$ admits a fiberwise $G$-action, we can ask about characteristic classes $c$ that are \emph{$G$-cobordism invariants}, i.e.\ the corresponding characteristic number $c^{\#}(M\ra B)$ depends only on the \emph{$G$-bordism class} of $M$ (for more on the notion of $G$-bordsim, see e.g.\ \cite[Chapter III]{conner-floyd}). Consider the case $\dim(M)=4$. Of course $\kappa_1^{\#}(M\ra B)$ is also a $G$-cobordism invariant; below we prove Corollary \ref{cor:bord} thus exhibiting more classes that have this property. 

\begin{proof}[Proof of Corollary \ref{cor:bord}]
Let $\Si$ be a closed surface and fix an $(S,G)$ bundle $M^4\ra\Si$. Let $E\ra\Si$ be the Hodge bundle with eigenbundles $E=\bigoplus_{q^m=1}E_q$. We aim to show that the numbers
\[c_1^{\#}(E_q\ra\Si)=\big\lan c_1(E_q),[\Si]\big\ran\]
depend only on the $G$-bordism class of $M$. 

Suppose that there is a $G$-manifold $W^5$ such that $M=\pa W$ (as $G$-manifolds). To prove the corollary, we must show that $c_1^{\#}(E_q\ra\Si)=0$. First observe that, by Theorem \ref{thm:main}, $c_1^{\#}(E_q\ra\Si)$ is a linear combination of the signature $\sig(M)$ and the intersection numbers $\#(M^\tau_j\cdot M^\tau_j)$, where $\tau$ generates $\Z/m\Z$, and we decompose the fixed set $M^\tau=\bigcup_{j=1}^{m-1}M^\tau_j$ according to the action of $\tau$ on the normal bundle (as in the statement of Theorem \ref{thm:index}). Now $\sig(M)=0$ because $M=\pa W$, and we claim that $\#(M^\tau_j\cdot M^\tau_j)=0$ as well. To see the latter, note that $M^\tau$ and $W^\tau$ are submanifolds (average a metric so that $\tau$ acts by isometries), and $M^\tau=\pa(W^\tau)$ because $M=\pa W$ as $G$-manifolds. It follows that $M^\tau\cdot M^\tau=\pa(W^\tau\cdot W^\tau)$. Since $W^\tau$ is a 3-manifold, $W^\tau\cdot W^\tau$ is a 1-manifold with boundary, and the boundary points occur in pairs, which implies that  $\#(M^\tau\cdot M^\tau)=0$, as desired. 
\end{proof}

{\it Remark.} It would be interesting to determine precisely which elements of $H^*\big(\Mod(S)^G;\Q\big)$ are $G$-cobordism invariants following Church--Crossley--Giansiracusa \cite{ccg}.

\bibliographystyle{plain}
%\bibliography{/Users/sm/Documents/GeneralMath/LaTeX/tpbib}
\bibliography{modarith-bib}
%\bibliography{/Users/bena/Dropbox/Documents/GeneralMath/LaTeX/tpbib}

\end{document}